\newtheorem{thm}{Theorem}[section]
\newtheorem{cor}[thm]{Corollary}
\newtheorem{prop}[thm]{Proposition}
\theoremstyle{definition}
\newtheorem{defn}[thm]{Definition}
\newtheorem{example}[thm]{Example}
\theoremstyle{remark}
\newtheorem{rem}[thm]{Remark}
\numberwithin{equation}{section}
\newcommand{\R}{\mathbb R}
\newcommand{\Z}{\mathbb Z}
\newcommand{\C}{\mathbb C}
\newcommand{\F}{\mathcal{F}}
\newcommand{\ra}{\rightarrow}
\newcommand{\g}{\gamma}
\newcommand{\ds}{\displaystyle}
\newcommand{\OO}{{\mathcal O}}
\newcommand{\LL}{\mathcal{L}}
\newcommand{\pz}{PSL(2,\Z)}
\newcommand{\sz}{SL(2,\Z)}
\newcommand{\h}{\mathcal H}
\newcommand{\N}{\mathcal N}
\renewcommand{\Im}{\rm Im\,\,}
\renewcommand{\Re}{\rm Re\,\,}
\begin{document}

\title{Applications of $(a,b)$-continued fraction transformations}

\dedicatory{Dedicated to the memory of Dan Rudolph}

\author{Svetlana Katok}
\address{Department of Mathematics, The Pennsylvania State University,
University Park, PA 16802} 
\email{katok\_s@math.psu.edu}
\author{Ilie Ugarcovici}
\address{Department of Mathematical Sciences,
DePaul University, Chicago, IL 60614} 
\email{iugarcov@depaul.edu}
\subjclass[2000]{Primary 37D40, 37B40; Secondary 11A55, 20H05}
\keywords{Continued fractions, modular surface, geodesic flow, invariant measure}
\thanks{The second author is partially supported by the NSF grant DMS-0703421}

\begin{abstract} We describe a general method of arithmetic coding of geodesics on the modular surface based on the study of one-dimensional Gauss-like maps associated to a two parameter family of continued fractions introduced in \cite{KU3}.  The finite rectangular structure of the attractors of the natural extension maps and the corresponding ``reduction theory" play an essential role. In special cases, when an $(a,b)$-expansion admits a so-called ``dual", the coding sequences are obtained by juxtaposition of the boundary expansions of the fixed points, and the set of coding sequences is a countable sofic shift. We also prove that the natural extension maps are Bernoulli shifts and compute  the density of the absolutely continuous invariant measure and the measure-theoretic entropy of the one-dimensional map. \end{abstract}

\maketitle
\section{Introduction and background}

In \cite{KU3}, the authors studied a  new two-parameter family of continued fraction transformations. These transformations can be defined using the standard generators $T(x)=x+1$, $S(x)=-1/x$ of the modular group  $\sz$ and considering $f_{a,b}:\bar\R\ra\bar\R$ given by
\begin{equation}\label{fab}
f_{a,b}(x)=\begin{cases}
x+1  &\text{ if }  x< a\\
-\displaystyle\frac{1}{x} &\text{ if } a\le x<b\\
x-1  &\text{ if } x\ge b\,.
\end{cases}
\end{equation}
Under the assumption that the parameters $(a,b)$ belong to the set
\[
\mathcal P=\{(a,b)\, |\, a\leq 0\leq b,\,b-a\geq 1,\,-ab\leq 1\}\,,
\]
one can introduce corresponding continued fraction algorithms by using the first return map of $f_{a,b}$ to the interval $[a,b)$. Equivalently, these so called {\em $(a, b)$-continued fractions} can be defined using 
a generalized integral part function: 
\begin{equation}
\lfloor x\rceil_{a,b}=\begin{cases}
\lfloor x-a \rfloor &\text{if } x<a\\
0 & \text{if } a\le x<b\\
\lceil x-b \rceil & \text{if } x\ge b\,,
\end{cases}
\end{equation}
where $\lfloor x\rfloor$ denotes the integer part of $x$ and $\lceil x\rceil=\lfloor x\rfloor+1$.

A starting point of the theory is the following result  \cite[Theorem 2.1]{KU3}:
if  $(a,b)\in \mathcal P$, then
any irrational number $x$
can be expressed uniquely as an infinite continued fraction of the form
\[
x=n_0-\cfrac{1}{n_1 -\cfrac{1}{n_2-\cfrac{1}{\ddots}}}= \lfloor n_0, n_1,\cdots\rceil_{a,b},\,\,(n_k\neq 0 \text{ for } k\ge 1),
\]
where $n_0=\lfloor x\rceil_{a,b}$, $x_1=-\frac1{x-n_0}$ and 
$n_{k+1}=\lfloor x_{k+1}\rceil_{a,b},\,x_{k+1}=- \frac1{x_k-n_k}$,
i.e. the sequence of partial fractions $r_k=\lfloor n_0,n_1,\dots ,n_k\rceil_{a,b}$ converges to $x$.

It is possible to construct $(a,b)$-continued fraction expansions for rational numbers, too. However, such expansions will terminate after finitely many steps if $b\ne 0$. If $b=0$, the expansions of rational numbers will end with a tail of $2$'s, since $0=\lfloor 1,2,2,\dots\rceil_{a,0}$. 

The above family of continued fraction transformations contains three classical examples: the case $a=-1$, $b=0$  described in \cite{Z, K3} gives the ``minus" (backward) continued fractions, the case $a=-1/2$, $b=1/2$ gives the ``closest-integer" continued fractions considered first by Hurwitz in \cite{Hurwitz1}, and the case $a=-1$, $b=1$ was presented in \cite{S1, KU2} in connection with a method of coding symbolically the geodesic flow on the modular surface following Artin's pioneering work \cite{Artin} and corresponds to the regular ``plus" continued fractions with alternating signs of the digits. 

The main object of study in \cite{KU3} is a two-dimensional realization of the {\em natural extension map} of $f_{a,b}$, $F_{a,b}:\bar\R^2\setminus\Delta\ra \bar\R^2\setminus\Delta$,  $\Delta=\{(x,y)\in \bar\R^2| x=y\}$, defined by
  \begin{equation}\label{Fab}
F_{a,b}(x,y)=\begin{cases}
(x+1,y+1)  &\text{ if } y< a\\
\Bigl(-\displaystyle\frac{1}{x},-\displaystyle\frac{1}{y}\Bigr) &\text{ if } a\le y<b\\
(x-1,y-1)  &\text{ if } y\ge b\,.
\end{cases}
\end{equation}
Here is the main result  of that paper:

\begin{thm}[\cite{KU3}] \label{main}
There exists an explicit one-dimensional Lebesgue measure zero, uncountable set $\mathcal E$ that lies on the diagonal boundary $b= a+ 1$ of $\mathcal P$ such that:
\begin{itemize}
\item[(1)] for all $(a,b)\in\mathcal P\setminus\mathcal E$ the map $F_{a,b}$ has an attractor $D_{a,b}=\cap_{n=0}^\infty F_{a,b}^n(\bar\R^2\setminus \Delta)$ on which $F_{a,b}$ is essentially bijective. 
\item[(2)] The set $D_{a,b}$ consists of two (or one, in degenerate cases) connected components each having {\em finite rectangular structure}, i.e. bounded by non-decreasing step-functions with a finite 
number of steps.  
\item[(3)] Almost every point $(x,y)$ of the plane ($x\ne y$) is mapped to $D_{a,b}$ after finitely many iterations of $F_{a,b}$.
\end{itemize}
\end{thm}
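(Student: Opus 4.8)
The plan is to realize $D_{a,b}$ as the decreasing intersection of the forward images $D_{a,b}^{(n)}:=F_{a,b}^{\,n}(\bar\R^2\setminus\Delta)$. First I would note that each branch of $F_{a,b}$ acts diagonally by one of the generators $T,S,T^{-1}\in\sz$, the branch being selected solely by the value of $y$, and that $F_{a,b}$ maps $\bar\R^2\setminus\Delta$ into itself; hence the images are nested, $D_{a,b}^{(n+1)}\subseteq D_{a,b}^{(n)}$, and their intersection is exactly $D_{a,b}$. Since each branch applies a homeomorphism of $\bar\R$ to each coordinate, $F_{a,b}$ is injective on each branch, so that the \emph{essential bijectivity} asserted in part (1) will reduce to checking that almost every point of $D_{a,b}$ has a unique $F_{a,b}$-preimage lying again in $D_{a,b}$ -- a statement I can settle once the exact boundary of $D_{a,b}$ has been determined.

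The core of the argument, and the main obstacle, is part (2): the finite rectangular structure. Here I would track how the boundary of $D_{a,b}^{(n)}$ evolves under each iterate. Because the branch of $F_{a,b}$ is governed by $y$, the discontinuities of $F_{a,b}$ lie on the horizontal lines $y=a$ and $y=b$, and the horizontal levels of the step-function boundary of $D_{a,b}^{(n)}$ are precisely the heights produced by the forward $f_{a,b}$-orbits of the endpoints $a$ and $b$. Each new iterate can insert new steps only at heights given by the next points of these two orbits, so the number of steps remains finite exactly when the orbits of $a$ and $b$ under $f_{a,b}$ are eventually periodic. I would then define $\mathcal E$ to be the set of parameters on the critical diagonal $b=a+1$ for which at least one of these orbits fails to be eventually periodic -- eventual periodicity being automatic once $b-a>1$, which is why $\mathcal E$ is confined to the boundary -- and verify, via the explicit continued-fraction-like description of the orbits of $a$ and $b$, that $\mathcal E$ is uncountable yet of one-dimensional Lebesgue measure zero within that line. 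For $(a,b)\notin\mathcal E$ the sequence $D_{a,b}^{(n)}$ stabilizes after finitely many steps, yielding a limit region bounded by non-decreasing step-functions with finitely many steps.

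To obtain the two connected components, I would split the reduced $y$-range $[a,b)$ at the point $0$: the branch $S$ sends $[a,0)$ and $[0,b)$ to two disjoint intervals of the line, and this dichotomy propagates to two disjoint rectangular blocks in the $(x,y)$-plane, collapsing to a single component in the degenerate cases $a=0$ or $b=0$. With the exact boundary in hand, the bijectivity claimed in part (1) becomes a matching computation: the images of the finitely many rectangular pieces under their prescribed branches tile $D_{a,b}$ up to a set of measure zero, so $F_{a,b}|_{D_{a,b}}$ is a bijection mod zero.

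Finally, for part (3) I would combine the recurrence of the one-dimensional map with the contraction of the $S$-branch on the dual range. For almost every geodesic $(x,y)$, the ergodic properties of $f_{a,b}$ with respect to its absolutely continuous invariant measure force the $y$-coordinate to enter $[a,b)$ and return to it infinitely often, so $y$ becomes reduced after finitely many iterations. The $x$-coordinate, representing the complementary (``dual'') endpoint, is then driven into the bounded reduced $x$-extent of $D_{a,b}$ by the hyperbolicity of the diagonal action: each time $S$ is applied (because $y\in[a,b)$) it ejects small values of $x$ out of $[a,b]$ and, since $|S'(x)|=1/x^{2}\le 1$ there, contracts large values back, while the intervening $T^{\pm1}$ keep $x$ inside the dual region. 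Hence after finitely many returns the pair $(x,y)$ lands in $D_{a,b}$. The delicate point throughout remains the orbit-finiteness analysis of the second paragraph, which simultaneously produces the rectangular structure and pins down the exceptional set $\mathcal E$.
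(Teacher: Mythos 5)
The theorem you are proving is quoted in this paper from \cite{KU3} (the paper itself gives no proof, but the paragraph following the statement records exactly the mechanism the proof rests on), and your proposal conflicts with that mechanism at its central point. You claim the number of boundary steps ``remains finite exactly when the orbits of $a$ and $b$ under $f_{a,b}$ are eventually periodic,'' and that eventual periodicity ``is automatic once $b-a>1$.'' Both claims are wrong. The relevant objects are the upper and lower orbits $\OO_u(a),\OO_\ell(a)$ (orbits of $Sa$ and $Ta$) and $\OO_u(b),\OO_\ell(b)$ (orbits of $T^{-1}b$ and $Sb$), and the correct condition --- the \emph{finiteness condition} of \cite{KU3} --- is that these orbits are eventually periodic \emph{or} satisfy the \emph{cycle property}: $f^{m_1}_{a,b}(Sa)=f^{k_1}_{a,b}(Ta)=c_a$ (and similarly for $b$). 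The cycle property, which you never use, is the mechanism that makes the level set $\LL_{a,b}$ finite: once the upper and lower orbits meet, the values beyond the end of the cycle no longer contribute horizontal levels to the boundary of $D_{a,b}$, even though the orbits themselves are typically infinite and non-periodic. By contrast, eventual periodicity of, say, the orbit of $Sa$ forces an identity $g_1(a)=g_2(a)$ between two integer M\"obius words, which (apart from the degenerate case where the words cancel in $\pz$ --- essentially the strong cycle situation) confines $a$ to a countable set. So under your criterion the set of ``good'' parameters would be essentially a null set and your exceptional set would have full measure in $\mathcal P$, flatly contradicting the assertion that $\mathcal E$ is an uncountable null subset of the line $b=a+1$. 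What \cite{KU3} actually proves for $b-a>1$ is the cycle property, by a substantial case analysis; the boundary $b=a+1$, where cycles can fail to close, is where $\mathcal E$ arises, and isolating it is a separate, delicate argument. Relatedly, \cite{KU3} does not obtain $D_{a,b}$ by showing that the images $F_{a,b}^n(\bar\R^2\setminus\Delta)$ stabilize; it builds a candidate domain with finite rectangular structure out of the finite set $\LL_{a,b}$, proves $F_{a,b}$ is essentially bijective on it, and then identifies it with the attractor.

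Your argument for part (3) is circular where it is not vague. The absolutely continuous invariant measure of the one-dimensional map and its ergodicity are not available before Theorem \ref{main}: in this very paper (Section 6) the invariant density is obtained by integrating $dx\,dy/(1+xy)^2$ over the fibers of $\hat\Lambda_{a,b}$, i.e.\ it presupposes knowledge of $D_{a,b}$, and ergodicity is deduced from the cross-section representation of the geodesic flow, which again presupposes the theorem. Note also that no ergodicity is needed for the $y$-coordinate to enter $[a,b)$ --- the map merely translates by $\pm1$ until it lands there; the actual content of (3) is that the $x$-coordinate is eventually trapped between the step-function boundaries, and your contraction remark ($|S'(x)|=1/x^2\le 1$ for $|x|\ge 1$) is far too coarse to give this. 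In \cite{KU3} this ``reduction theory'' statement is proved by a direct argument using the explicit boundary of $D_{a,b}$, which is precisely why it appears as a separate conclusion of the theorem rather than as a consequence of soft hyperbolicity or ergodicity.
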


\begin{figure}[htb]
\includegraphics{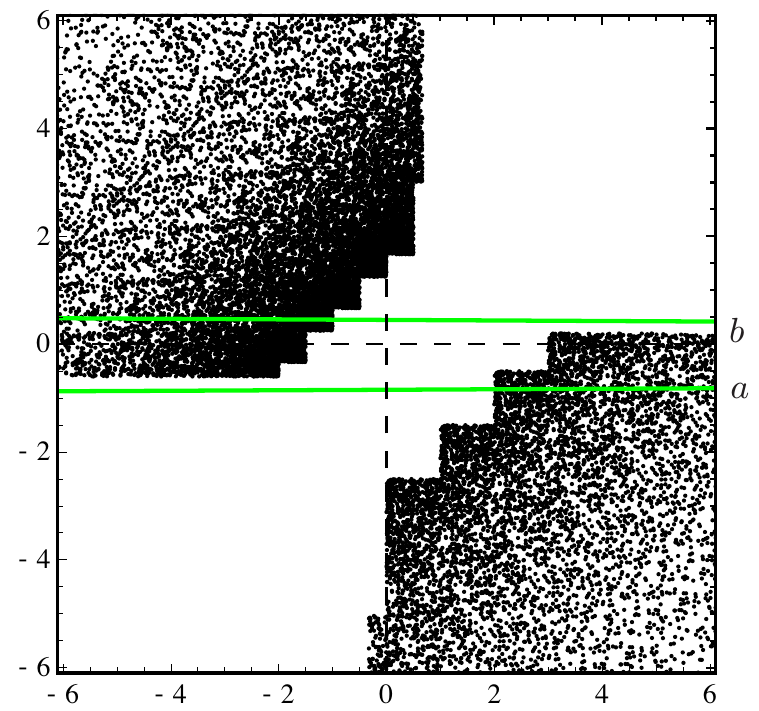}
\caption{Attracting domain $D_{a,b}$ for $a=-\frac{4}{5},\,b=\frac{2}{5}$}
\label{don-a}
\end{figure}

An essential role in the argument is played by the forward orbits associated to $a$ and $b$:  to $a$, the {\em upper orbit $\OO_u(a)$} (i.e. the orbit of $Sa$) and the {\em lower orbit $\OO_\ell(a)$} (i.e. the orbit of $Ta$), and to $b$, the {\em upper orbit $\OO_u(b)$} (i.e. the orbit of $T^{-1}b$) and  the {\em lower orbit $\OO_\ell(b)$} (i.e. the orbit of $Sb$). 
It was proved in \cite{KU3} that if $(a,b)\in\mathcal P\setminus\mathcal E$, then $f_{a,b}$ satisfies the {\em finiteness condition}, i.e. for both $a$ and $b$, their upper and lower orbits are either eventually periodic, or they satisfy the {\em cycle property}, i.e. they meet forming a cycle; more precisely, there exist
$k_1,m_1,k_2,m_2\geq 0$ s.t.
\[
f^{m_1}_{a,b}(Sa)=f^{k_1}_{a,b}(Ta)=c_a ,\,(\text{resp., } f^{m_2}_{a,b}(T^{-1}b)=f^{k_2}_{a,b}(Sb)=c_b),
\]
where $c_a$ and $c_b$ are the {\em ends of the cycles}.
If the products of transformations over the upper and lower sides of the cycle are equal, the cycle property is {\em strong}, otherwise, it is {\em weak}. In both cases  the set $\LL_{a,b}$ of the corresponding values is finite; ends of the cycles belong to the set $\LL_{a,b}$ if and only if they are equal to $0$, i.e. if the cycle is weak. The structure of the attractor $D_{a,b}$ is explicitly ``computed" from the finite set $\LL_{a,b}$.

The paper is organized as follows. In Section 2 we give some background information about geodesic flows and their
representations as  special flows over symbolic dynamical systems, and define the coding map.
In Section 3 we describe the reduction procedure for coding geodesics via $(a,b)$-continued fractions based on the study of the attractor of the associated natural extension map, define the corresponding cross-section set, and introduce the notion of {\em reduced geodesic}.
In Section 4 we prove that the first return map to the cross-section corresponds to a shift of the coding sequence (Theorem \ref{coding}) and, as a consequence, show that $(a,b)$-continued fractions satisfy the {\em Tail Property}, i.e. two $\sz$-equivalent real numbers have the same tails in their $(a,b)$-continued fraction expansions.
In Section 5 we introduce a notion of a {\em dual code} and show that if an $(a,b)$-expansion has a dual $(a',b')$-expansion, then the coding sequence of a reduced geodesic is obtained by juxtaposition of the  $(a,b)$-expansion of its attracting endpoint $w$ and the $(a',b')$-expansion of $1/u$, where $u$ is its repelling endpoint. We also prove that if the $(a,b)$-expansion admits a dual, then the set of admissible coding sequences is a sofic shift (Theorem \ref{soficshift}). 
In Section 6 we derive formulas for 
the density of the absolutely continuous invariant measure and the measure-theoretic entropy of the  one-dimensional Gauss-type maps and their natural extensions. We also prove that the natural extension maps are Bernoulli shifts. And finally, in Section 7 we apply results of \cite{KU3} to obtain explicit formulas for invariant measure for the one-dimensional maps for some regions of the parameter set $\mathcal P$.

\section{Geodesic flow on the modular surface and its representation as a special flow over a symbolic dynamical system}\label{codinggeodesicflow}

Let $\h=\{z=x+iy : y>0\}$ be the upper half-plane endowed with the
hyperbolic metric,
$\F=\{z\in\h : |z|\ge 1,\,\,|\Re\,
z|\le\frac{1}{2}\}$ be the standard fundamental region of the
modular group $\pz=\sz/\{\pm I\}$, and $M=\pz\backslash \h$ be the
modular surface. Let $S\h$ denote the unit tangent bundle of $\h$. We will use the coordinates $v=(z,\zeta)$ on $S\h$, where $z\in\h,\,\zeta\in\C,\,|\zeta|=\Im(z)$.
The quotient space $\pz\backslash S\h$ can be identified with the
unit tangent bundle of $M$, $SM$, although the structure of the
fibered bundle has singularities at the elliptic fixed points (see
\cite[\S 3.6]{K2} for details). Recall that geodesics in this model are half-circles or vertical half-rays.
The  geodesic flow $\{\tilde\varphi^t\}$ on $\h$ is defined as
an $\R$-action on the unit tangent bundle $S\h$ which moves a tangent vector along the geodesic defined by this vector with unit speed. The geodesic flow
$\{\tilde\varphi^t\}$ on $\h$ descents to the {\em geodesic flow}
$\{\varphi^t\}$ on the factor $M$ via the canonical projection 
\begin{equation}\label{pi}
\pi:S\h\to SM
\end{equation}
 of the unit tangent bundles. Geodesics on $M$ are orbits of the geodesic flow $\{\varphi^t\}$.

A {\em cross-section} $C$ for the geodesic flow is a subset of the
unit tangent bundle $SM$ visited by (almost) every geodesic
infinitely often both in the future and in the past. In other
words, every $v\in C$ defines an oriented geodesic $\g(v)$ on $M$
which will return to $C$ infinitely often. The ``ceiling" function
$g:C\to\R$ giving the {\em time of the first return} to $C$ is
defined as follows: if $v\in C$ and $t$ is the time of the first
return of $\g(v)$ to $C$, then $g(v)=t$. The map $R:C\to C$
defined by $R(v)=\varphi^{g(v)}(v)$ is called the {\em first
return map}. Thus $\{\varphi^t\}$ can be represented as a {\em
special flow} on the space
\[
{C}^{g}=\{(v,s) :  v\in C,\,\, 0\leq s\leq g(v)\},
\]
given by the formula $\varphi^t(v,s)=(v, s+t)$ with the
identification $(v,g(v))=(R(v),0)$.

\begin{figure}[thb]
\begin{center}    
\includegraphics{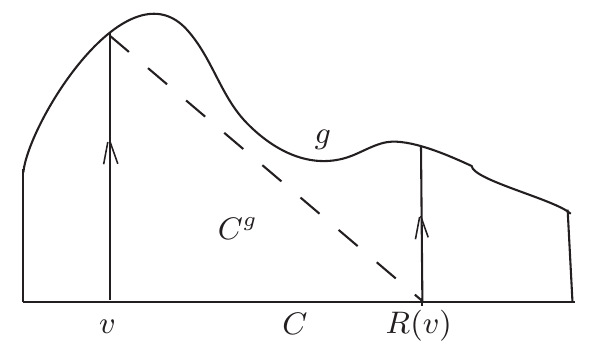}
\caption{Geodesic flow is a special flow}
\end{center}
\end{figure}

Let $\mathcal N$ be a finite or countable alphabet, $\N^{\Z}=\{x=\{n_i\}_{i\in \Z}\mid n_i\in\N\}$ be the space of all
bi-infinite sequences endowed with the Tikhonov (product) topology,
\[
\sigma:\N^\Z\to \N^\Z\text{ defined by  }(\sigma x)_i=n_{i+1}
\]
 be the left  shift map, and
$\Lambda\subset \N^\Z$ be a closed
$\sigma$-invariant  subset. Then $ (\Lambda,\sigma)$ is called a
{\em symbolic dynamical system}. There are some important classes of such dynamical systems. The space
$(\N^\Z, \sigma)$ is called the {\em full shift} (or the {\em topological Bernoulli shift}). If the space $\Lambda$
 is given by a
set of simple transition rules which can be described with the
help of a matrix consisting of zeros and ones, we say that $
(\Lambda,\sigma)$ is a {\em one-step topological Markov chain} or
simply a {\em topological Markov chain} (also called a {\em subshift of finite
type}). A factor of a topological Markov chain is called a {\em sofic shift}. (See  \cite[\S 1.9]{KH} for the definitions.)

\smallskip

In order to represent the geodesic flow as a special flow over a
symbolic dynamical system, one needs to choose an appropriate
cross-section $C$ and code it, i.e. to find an appropriate
symbolic dynamical system $ (\Lambda,\sigma)$ and a continuous
surjective map $\operatorname{Cod}: \Lambda\to C$
(in some cases the actual domain of $\operatorname{Cod}$ is
$\Lambda$ except a finite or countable set of excluded sequences) defined
such that the diagram
\[
\begin{CD}
\Lambda  @>{\operatorname{\sigma}}>>  \Lambda\\
@V{\operatorname{Cod}}VV      @VV{\operatorname{Cod}}V \\
C    @>{\operatorname{\mathit R}}>>C
\end{CD}
\]
is commutative. We can then talk about {\em coding sequences} for geodesics
defined up to a shift which corresponds to a return of the geodesic to the cross-section $C$. 
Notice that usually the coding map is not injective but only
finite-to-one (see e.g. \cite[\S 3.2 and \S 5]{Ad}).

There are two essentially different methods of coding geodesics on
surfaces of constant negative curvature. The geometric code, with respect to a given fundamental 
region, is obtained by a construction universal for all Fuchsian groups. The second method is specific for the modular group and is of arithmetic nature: 
it uses continued fraction expansions of the end points of the geodesic at infinity and 
a so-called reduction theory (see \cite{KU1,KU2} for the three classical  cases). Here we will describe a general method of arithmetic coding via $(a,b)$-continued fractions that is based on study of the attractor of the associated natural extension map. 
This approach, coupled with ideas of Bowen and Series \cite{BS}, may be useful for coding of geodesics on quotients by general Fuchsian groups.

\section{The reduction procedure} In what follows we
will denote the end points of geodesics on $\h$ by $u$ and $w$, and whenever we refer to such geodesics, we use $(u,w)$ as their coordinates on  $\bar\R^2$ ($u\neq w$). 

The reduction procedure for coding symbolically the geodesic flow on the modular surface via continued fraction expansions was presented for the three classical cases in \cite{KU2}; 
for a survey on symbolic dynamics of the geodesic flow see also \cite{KU1}. 
Here we  describe the reduction procedure for $(a,b)$-continued fractions and explain how it can be used for coding purposes. 

Let $\g$ be an arbitrary geodesic on $\h$ from $u$ to $w$ (irrational end points), and 
$w=\lfloor n_0,n_1,\dots\rceil_{a,b}$.
 We construct the sequence of
real pairs $\{(u_k,w_k)\}$ ($k\ge 0$) defined by
\begin{equation}\label{wk}
u_0=u,\,\,w_0=w \text{ and }w_{k+1}=ST^{-n_k}w_k\,,\quad u_{k+1}=ST^{-n_k}u_k\,.
\end{equation}
Each geodesic $\g_k$ from $u_k$ to $w_k$ is $\pz$-equivalent
to $\g$ by construction. It is convenient to describe this procedure using the {\em reduction map} that combines the appropriate iterate of the map $F_{a,b}$:
\[
R_{a,b}:\R^2\setminus\Delta\to \R^2\setminus\Delta
\]
given by the formula $R_{a,b}(u,w)=(ST^{-n}u,ST^{-n}w)$, where $n$ is the first digit in the $(a,b)$-expansion of $w$. Notice that $(u_k,w_k)=R^k_{a,b}(u,w)$.

\begin{defn}
 A geodesic in $\h$ from $u$ to $w$ is called {\em $(a,b)$-reduced} if $(u,w)\in \Lambda_{a,b}$, where
 \[
\Lambda_{a,b}=
F_{a,b}(D_{a,b}\cap\{a\leq w\leq b\})=S(D_{a,b}\cap\{a\leq w\leq b\}).
\]
\end{defn}

According to Theorem \ref{main},
for (almost) every geodesic $\g$ from $u$ to $w$ in $\mathcal H$, the above algorithm produces in finitely many steps an $(a,b)$-reduced geodesic $\pz$-equivalent to $\g$, and an application of this algorithm to an $(a,b)$-reduced geodesic produces another $(a,b)$-reduced geodesic. In other words,
there
exists a positive integer $\ell$ such that $R^\ell_{a,b}(u,w)\in\Lambda_{a,b}$ and 
\(
R_{a,b}:\Lambda_{a,b}\to \Lambda_{a,b}
\)
is bijective (with the exception of some segments of the boundary of $\Lambda_{a,b}$ and their images). 

Let $\g$ be a reduced geodesic with the repelling point $u\neq 0$ and the attracting point
\begin{equation}\label{ww}
w=\lfloor n_0,n_{1},\dots\rceil_{a,b}.
\end{equation}
Then, by successive applications of the map $R_{a,b}$, we obtain a sequence of real pairs $\{(u_k,w_k)\}$ ($k\ge 0$) (see \eqref{wk} above) such that
each geodesic $\g_k$ from $u_k$ to $w_k$ is $(a,b)$-reduced.
Using  the bijectivity of the map $R_{a,b}$, we extend the sequence (\ref{ww}) to the past to obtain a bi-infinite sequence of integers
\begin{equation}\label{codingseq}
\lfloor\g\rceil=\lfloor\dots,n_{-2},n_{-1},n_0,n_1,n_2,\dots\rceil,
\end{equation}
called the {\em coding sequence} of $\gamma$, as follows. There exists an integer $n_{-1}\neq 0$ and a real pair $(u_{-1},w_{-1})\in\Lambda_{a,b}$ such that $ST^{-n_{-1}}w_{-1}=w=w_0$ and $ST^{-n_{-1}}u_{-1}=u=u_0$. 
Notice that $\lfloor w_{-1}\rceil_{a,b}=n_{-1}$. By uniqueness of the $(a,b)$-expansion, we conclude
that $w_{-1}=\lfloor n_{-1},n_0,n_1,\dots\rceil_{a,b}$.
Continuing inductively, we define the sequence of integers $n_{-k}$
and
the real pairs $(u_{-k},w_{-k})\in\Lambda_{a,b}$ ($k\geq 2$), where 
\[
w_{-k}= \lfloor n_{-k},n_{-k+1}, n_{-k+2},\dots\rceil_{a,b}
\]
by $ST^{-n_{-k}}w_{-k}=w_{-(k-1)}$ and $ST^{-n_{-k}}u_{-k}=u_{-(k-1)}$. We also associate to $\g$ a bi-infinite sequence of $(a,b)$-reduced geodesics
\begin{equation}\label{seq-gamma}
(\dots, \g_{-2},\g_{-1}, \g_0,\g_1,\g_2,\dots),
\end{equation}
where $\g_k$ is the geodesic from $u_k$ to $w_k$. 
\begin{rem}\label{intermediate}
Notice that all ``intermediate" geodesics $T^{-s}\g_k$ ($1\leq s\le n_k$) obtained from $\g_k$ using the map $F_{a,b}$ are not $(a,b)$-reduced.

\end{rem}
\begin{prop} A formal minus continued fraction comprised from the digits of the ``past" of (\ref{codingseq}),
\[
n_{-1}-\cfrac{1}{n_{-2} -\cfrac{1}{n_{-3}-\cfrac{1}{\ddots}}}=(n_{-1},n_{-2},n_{-3},\dots )
\]
converges to $1/u$.
\end{prop}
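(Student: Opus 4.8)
The plan is to read off from the reduction relations a backward (``minus'') continued fraction recursion for $1/u$ whose successive tails are exactly the repelling endpoints $u_{-k}$ of the past reduced geodesics, and then to prove that the associated convergents converge to $1/u$ by controlling the tails and the denominators.

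First I would record the elementary identities $S^2=\Id$ and $ST^{-n}(x)=-1/(x-n)$. The defining backward relation $ST^{-n_{-j}}(u_{-j})=u_{-(j-1)}$ then reads $u_{-(j-1)}=1/(n_{-j}-u_{-j})$, equivalently $1/u_{-(j-1)}=n_{-j}-u_{-j}$. Starting from $u_0=u$ (so the base case is $1/u=n_{-1}-u_{-1}$) and iterating $k$ times gives the \emph{exact} finite identity
\[
\frac1u = n_{-1}-\cfrac{1}{n_{-2}-\cfrac{1}{\ddots-\cfrac{1}{n_{-k}-u_{-k}}}},
\]
which is an honest induction showing that the $k$-th tail of the formal fraction $(n_{-1},n_{-2},\dots)$ is precisely $u_{-k}$. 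Since every digit $n_{-j}\neq 0$, each step is well defined.

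Next I would introduce the convergents $p_k/q_k$ of $(n_{-1},n_{-2},\dots)$ via the three-term recurrences $p_k=n_{-k}p_{k-1}-p_{k-2}$, $q_k=n_{-k}q_{k-1}-q_{k-2}$, for which the associated $SL(2)$-matrices have determinant $1$, yielding the constant relation $p_kq_{k-1}-p_{k-1}q_k=-1$. Writing the finite fraction above as a M\"obius function of its tail and substituting the exact value $u_{-k}$, the determinant relation produces the clean error formula
\[
\frac1u-\frac{p_k}{q_k}=\frac{-\,u_{-k}}{q_k\,(q_k-u_{-k}q_{k-1})}.
\]
It then suffices to show that the right-hand side tends to $0$.

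The remaining, and main, point is to control the denominator. The tails $u_{-k}$ are bounded, since each pair $(u_{-k},w_{-k})$ is $(a,b)$-reduced and hence lies in $\Lambda_{a,b}$, a set of finite rectangular structure and therefore bounded; so the numerator $-u_{-k}$ stays bounded. What must be proved is that $|q_k(q_k-u_{-k}q_{k-1})|\to\infty$, and this is the crux: for general digits $n_{-j}$ (which here may equal $\pm 1$) monotone growth of the denominators is not automatic. I would deduce it from the reduction dynamics. The products $M_k=ST^{-n_{-1}}ST^{-n_{-2}}\cdots ST^{-n_{-k}}\in\pz$ satisfy $u=M_k(u_{-k})$ and $w=M_k(w_{-k})$, so $M_k$ carries the past reduced geodesic $\g_{-k}$ onto the fixed geodesic $\g$; up to sign, the entries of $M_k$ are exactly the convergent data $p_{k-1},p_k,q_{k-1},q_k$. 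Because the backward reduction expands transversally to $\g$ --- equivalently, $M_k$ contracts a fixed neighborhood of the bounded, separated endpoints $(u_{-k},w_{-k})$ onto the single geodesic $\g$, and the $M_k$ are distinct elements of the discrete group $\pz$ (the closed-geodesic case being handled separately through the hyperbolicity of the period matrix) --- the relevant entries must grow, forcing $|q_k|\to\infty$; feeding this back into the error formula finishes the argument. The step I expect to require the most care is ruling out a compensating degeneration of the factor $q_k-u_{-k}q_{k-1}$, i.e.\ verifying that the convergents genuinely separate from the bounded tails rather than merely that the matrix entries are unbounded; I would settle this using the determinant identity together with the boundedness and separation of the reduced endpoints.
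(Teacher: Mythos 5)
Your first step (the exact finite identity $\tfrac1u=(n_{-1},\dots,n_{-k}-u_{-k})$) and your determinant/error formula are correct, and they match the closing step of the paper's proof. The genuine gap is exactly where you flag it: the denominator growth, which is what the paper's proof is in essence entirely devoted to, and the tools you propose cannot deliver it. Distinctness of the matrices $M_k=ST^{-n_{-1}}\cdots ST^{-n_{-k}}$ in $\pz$ is not automatic: since $(ST)^3=\mathrm{Id}$ in $\pz$, the digit string $n_{-1}=n_{-2}=n_{-3}=-1$ gives $M_3=\mathrm{Id}$. Such strings are perfectly consistent with the only features of reducedness you actually use, namely boundedness of the $u$-coordinates and separation of the endpoints from the diagonal: starting from $(u_0,w_0)=(-\tfrac12,10)$ and pulling back three times by $T^{-1}S$ one gets three pairs, all bounded and uniformly far from the diagonal, which then repeat periodically, while the formal fraction $(-1,-1,-1,\dots)$ diverges (its denominators $q_k$ cycle through $-1,0,1$). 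These pairs of course do not lie in $\Lambda_{a,b}$ --- but that is precisely the point: the statement is false for general ``bounded, separated'' configurations, so any proof must use finer structure of $\Lambda_{a,b}$, and your proposal never invokes any. Moreover, even granting distinctness, discreteness only forces $\max(|p_k|,|p_{k-1}|,|q_k|,|q_{k-1}|)\to\infty$, which does not give growth of the specific factor $q_k(q_k-u_{-k}q_{k-1})$: the matrices $\bigl(\begin{smallmatrix} N & -(N+1)\\ 1 & -1\end{smallmatrix}\bigr)$ are distinct and unimodular with bounded bottom row. Your proposed remedy for this last issue (``the determinant identity together with boundedness and separation of the reduced endpoints'') consists of the same ingredients already shown insufficient by the cycling example.

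The missing idea, which is the actual content of the paper's proof, is a sign condition on the past digits: using the explicit inclusions \eqref{upper} and \eqref{lower} for the two components of $\Lambda_{a,b}$, the paper shows that $|n_{-k}|=1$ forces $n_{-k}\cdot n_{-(k+1)}<0$ (a digit $1$ must be followed by a negative digit, a digit $-1$ by a positive one). This rules out exactly the $(ST)^{\pm 3}$ relations above and, by the cited Lemma 1.1 of \cite{KU}, guarantees convergence of the formal minus continued fraction; the exact identity then pins the limit to $1/u$, which is your first step. So to repair your argument you would have to establish this digit condition (or some equivalent fine property of $\Lambda_{a,b}$), at which point your matrix machinery amounts to reproving the convergence lemma that the paper simply cites. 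One further small correction: $\Lambda_{a,b}$ is not bounded (the $w$-coordinate runs to $\pm\infty$ in \eqref{upper}, \eqref{lower}); what is true, and what your error formula needs, is that the $u$-coordinate is confined to $[-1,1]$.
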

\begin{proof} By \cite[Lemma 1.1]{KU},  it will be sufficient to check that $|n_{-k}|=1$ implies
$n_{-k}\cdot n_{-(k+1)}<0$, i.e. the digit $1$ must be followed by a negative integer and the digit $-1$ must be followed by a positive integer. We use the following properties of the set $\Lambda_{a,b}$ that can be derived from our knowledge of the shape of the set $D_{a,b}$ determined in \cite[Lemmas 5.6, 5.10, 5.11]{KU3}.
The upper part of  $\Lambda_{a,b}$ is contained in the region 
\begin{equation}\label{upper}
\begin{aligned}
&[-1,0]\times\Bigl[-\frac1{a},+\infty\Bigr]\cup[0,1]\times\Bigl[-\frac1{b-1},+\infty\Bigr]&\text{ if }b<1\\
&[-1,0]\times\Bigl[-\frac1{a},+\infty\Bigr]&\text{ if }b\ge 1.
\end{aligned}
\end{equation}
The lower part of $\Lambda_{a,b}$ is contained in the region 
\begin{equation}\label{lower}
\begin{aligned}
&[-1,0]\times\Bigl[-\infty,-\frac1{a+1}\Bigr]\cup[0,1]\times\Bigl[-\infty,-\frac1{b}\Bigr]&\text{ if }a> -1\\
&[0,1]\times\Bigl[-\infty,-\frac1{b}\Bigr]&\text{ if }a\le -1.
\end{aligned}
\end{equation}
Recall that $(u_{-(k+1)},w_{-(k+1)})=  (T^{n_{-(k+1)}}Su_{-k},T^{n_{-(k+1)}}Sw_{-k})$ for an appropriate integer $n_{-(k+1)}\neq 0$.
Suppose $n_{-k}=1$. Then $w_{-k}>0$. If $u_{-k}<0$,  then $Su_{-k}>0$ and $Sw_{-k}<0$,
and it takes a negative power of $T$ to bring it back to (the lower component of) $\Lambda_{a,b}$, i.e.
$n_{-(k+1)}<0$. The case $u_{-k}>0$, according to  (\ref{upper}), can only take place if $b\leq 1$.
In this case, $-1/(b-1)\le w_{-k}<b+1$, which is equivalent to $b>1$, a contradiction. Therefore $n_{-k}=1$ implies $n_{-(k+1)}<0$.
A similar argument shows that  $n_{-k}=-1$ implies $n_{-(k+1)}>0$. We conclude that the formal minus continued fraction converges. In order to prove that the limit is equal to $1/u$ we use the recursive definition of the digits $n_{-1},n_{-2},\dots$, to write
\[
\frac1{u}=n_{-1}-u_{-1}=n_{-1}-\cfrac{1}{n_{-2} -u_{-2}}=\cdots =(n_{-1},n_{-2},\dots ,n_{-k}-u_{-k})=\cdots,
\]
and the conclusion follows since the formal minus continued fraction converges.
\end{proof}

Let 
\[
C=\{z\in\h \mid |z|=1, \,-1\leq \Re z\leq 1\}
\] 
be the upper-half of the unit circle, and
\[
C^-=\{z\in\h \mid |z+1|=1, \,-\frac12\leq \Re z\leq 0\}
\] 
and 
\[
C^+=\{z\in\h \mid |z-1|=1,\,\,0\leq \Re z\leq \frac12\}
\] 
be the images of the two vertical boundary components of the fundamental region $\F$ under $S$ (see Figure \ref{fig-section}).
\begin{prop} \label{geod-reduced}  Every $(a,b)$-reduced geodesic either intersects $C$ or both curves $C^-$ and $C^+$.
\end{prop}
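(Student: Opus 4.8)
The plan is to translate the statement into elementary hyperbolic geometry and then run a case analysis on the location of the endpoints $(u,w)\in\Lambda_{a,b}$ that is dictated by the containment regions \eqref{upper}–\eqref{lower}.

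First I record the crossing criteria. Each of the three arcs is a piece of a geodesic: $C$ is the \emph{entire} geodesic with endpoints $-1,1$; $C^+$ is the sub-arc $\{\Re z\in[0,\tfrac12]\}$ of the geodesic $(0,2)$ (the circle $|z-1|=1$); and $C^-$ is the sub-arc $\{\Re z\in[-\tfrac12,0]\}$ of the geodesic $(-2,0)$ (the circle $|z+1|=1$). Writing the geodesic from $u$ to $w$ as $x^2+y^2=(u+w)x-uw$, two geodesics $(u,w)$ and $(p,q)$ meet in $\h$ iff their endpoints link on $\bar\R$, and then the abscissa of the intersection is $x=\frac{uw-pq}{u+w-p-q}$. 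Hence $\g$ meets $C$ iff exactly one of $u,w$ lies in $(-1,1)$; $\g$ meets $C^+$ iff $(u,w)$ links with $(0,2)$ and $\frac{uw}{u+w-2}\in[0,\tfrac12]$; and $\g$ meets $C^-$ iff $(u,w)$ links with $(-2,0)$ and $\frac{uw}{u+w+2}\in[-\tfrac12,0]$.

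Next I use \eqref{upper}–\eqref{lower}, which in particular force $u\in[-1,1]$ for every reduced $\g$, together with the defining inequalities of $\Pe$ (notably $-ab\le1$), to show that $\g$ meets $C$ in all but two mirror-symmetric regions. Concretely: on the upper $[0,1]$-strip one has $w\ge\frac1{1-b}\ge1$; on the upper $[-1,0]$-strip with $a\ge-1$ one has $w\ge-\frac1a\ge1$, and since $b\ge1\Rightarrow a\ge-1$, the only way to avoid $w\ge1$ here is $a<-1$ (whence $b<1$); symmetrically, on the lower $[-1,0]$-strip $w\le-\frac1{a+1}\le-1$, and on the lower $[0,1]$-strip with $b\le1$ (forced when $a\le-1$) $w\le-\frac1b\le-1$. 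In each of these $u\in(-1,1)$ while $|w|\ge1$, so exactly one endpoint lies in $(-1,1)$ and $\g$ meets $C$.

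It then remains to treat the upper $[-1,0]$-strip with $a<-1,\ b<1$; by the symmetry $(a,b,u,w)\mapsto(-b,-a,-u,-w)$ of the family (which interchanges $C^+$ and $C^-$) this also disposes of the mirror lower case. Here $u\in[-1,0)$ and $w\ge-\frac1a\in(0,1)$; if $w>1$ we meet $C$ as above, so assume $0<w\le1$. Then $(u,w)$ links with both $(0,2)$ and $(-2,0)$, and the arc conditions reduce to the two bilinear inequalities $2uw-u-w+2\ge0$ and $2uw+u+w+2\ge0$ on $[-1,0]\times(0,1]$; being bilinear they attain their minima at the corners, and both minima equal $0$ (at $(u,w)=(-1,1)$), so both hold and $\g$ meets $C^-$ and $C^+$. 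The hard part is this last step: matching each strip of \eqref{upper}–\eqref{lower} to the correct $\Pe$-inequality, and above all verifying in the two remaining regions that the intersection abscissae actually land inside $[0,\tfrac12]$ and $[-\tfrac12,0]$ (not merely on the full circles $|z\mp1|=1$) — this is exactly where the bilinear inequalities, and implicitly the constraint $-ab\le1$, do the work. Degenerate configurations ($u=0$, $b=0$, or the corner $(u,w)=(-1,1)$) correspond to the exceptional, measure-zero boundary of $\Lambda_{a,b}$ and are treated separately, as in Theorem \ref{main}.
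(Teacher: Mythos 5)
Your proof is correct, and its skeleton matches the paper's: both arguments run a case analysis over the strips in \eqref{upper}--\eqref{lower}, note that these force $u\in[-1,1]$, show that in all but two mirror-symmetric regions one has $|w|\ge 1$ so that exactly one endpoint lies in $(-1,1)$ and the geodesic crosses $C$, and then treat the two exceptional regions ($a<-1$ with $u\in(-1,0)$, $w\in(0,1)$, and its mirror $b>1$ with $u\in(0,1)$, $w\in(-1,0)$) by showing the geodesic crosses both $C^-$ and $C^+$. Where you genuinely add something is in that last step: the paper simply asserts that in these regions the geodesic ``intersects first $C^-$ and then $C^+$'' (resp.\ first $C^+$ then $C^-$), with no verification that the intersections with the circles $|z\mp 1|=1$ have real part in $[0,\tfrac12]$ (resp.\ $[-\tfrac12,0]$), i.e.\ that they land on the arcs $C^{\pm}$ rather than elsewhere on those circles. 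Your linking criterion, the explicit abscissa $\frac{uw-pq}{u+w-p-q}$, and the reduction to the bilinear inequalities $2uw\mp(u+w)+2\ge 0$, minimized at the corners of $[-1,0]\times[0,1]$ (both minima $0$, attained only at $(\mp 1,\pm 1)$, where the geodesic still meets the closed arcs at their endpoints $\Re z=\pm\tfrac12$), supply exactly this missing verification. Your use of the symmetry $(a,b,u,w)\mapsto(-b,-a,-u,-w)$ to dispose of the mirror case is also legitimate --- it is the same symmetry of the family that the paper itself invokes in Section 7 --- whereas the paper writes out both cases. In short: same decomposition, but your version is the more complete one; the bilinear-inequality step is precisely what the paper's one-line claims about hitting $C^{\pm}$ tacitly rely on.
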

\begin{proof}
If $a,b$ are such that $-1\le a \le 0$ and $0\le b\le 1$, then by properties (\ref{upper}) and (\ref{lower}) of the set $\Lambda_{a,b}$, if $(u,w)\in \Lambda_{a,b}$, then $-1\leq u\leq 1$ and $w\geq -\frac1{a}$ or $w\leq -\frac1{b}$, and hence all $(a,b)$-reduced geodesics intersect $C$. For the case $b>1$ we have: if $-1<u<0$, then either $w>-\frac1{a}>b>1$ or $w<-\frac1{a+1}<-1$, i.e. the geodesic intersects $C$; if $0<u<1$, then (\ref{upper}) implies that  $w<-\frac1{b}<a<0$, thus the corresponding geodesic intersects $C$ if $w<-1$, and it intersects first $C^+$ and then $C^-$, if $-1<w<0$. Similarly, for the case $a<-1$ we have: 
if $0<u<1$, then either $w<-\frac1{b}<a<-1$
or $w>-\frac1{b-1}>1$, i.e. the geodesic intersects $C$; if $-1<u<0$, then
(\ref{lower}) implies that $w>-\frac1{a}>b>0$, therefore the corresponding geodesic intersects $C$ if $w>1$, and it intersects
first $C^-$ and then $C^+$ if $0<w<1$. \end{proof}

Based on Proposition \ref{geod-reduced} we introduce the notion of the {\em cross-section point}. It is either the intersection of a reduced geodesic $\g$ with $C$, or, if  $\g$ does not intersect $C$, its first intersection with $C^-\cup C^+$.
 
Now we can define a map
\[
\varphi: \Lambda_{a,b}\to S\h\,,\varphi(u,w)= (z,\zeta)
\]
where $z\in\h$ is the cross-section point on the geodesic $\g$ from $u$ to $w$, 
and $\zeta$ is the unit vector tangent to $\g$ at $z$.
The map is clearly injective. Composed with the canonical projection $\pi$ introduced in (\ref{pi}) we obtain a map 
\[
\pi\circ\varphi:\Lambda_{a,b}\to SM.
\] 

Let $C_{a,b}=\pi\circ\varphi(\Lambda_{a,b})\subset SM$. This set can be described as follows:
$C_{a,b}=P\cup Q_1\cup Q_2$, where
$P$ consists of the unit vectors based on the circular boundary of the fundamental region $\F$ 
pointing inward such that the corresponding geodesic $\g$ on the upper half-plane $\h$ is $(a,b)$-reduced,
$Q_1$ consists of the unit vectors based on the right vertical boundary of $\F$ pointing inward such that either $S\g$ or $TS\g$ is $(a,b)$-reduced (notice that they cannot both be reduced), and 
$Q_2$  consists of the unit vectors based on the left vertical boundary of $\F$ pointing inward such that either $S\g$ or $T^{-1}S\g$ is $(a,b)$-reduced (see Figure \ref{fig-section}). 
Then a.e. orbit of $\{\varphi^t\}$ returns to $C_{a,b}$, i.e. $C_{a,b}$ is a {\em cross-section} for $\{\varphi^t\}$, and
$\Lambda_{a,b}$ is a parametrization of $C_{a,b}$. The map $\pi\circ\varphi$ is injective, as follows from Remark \ref{intermediate}: only one of the geodesics $\g$, $S\g$, $T^{-1}S\g$, and $TS\g$ can be reduced.
\begin{figure}[htb]
\includegraphics{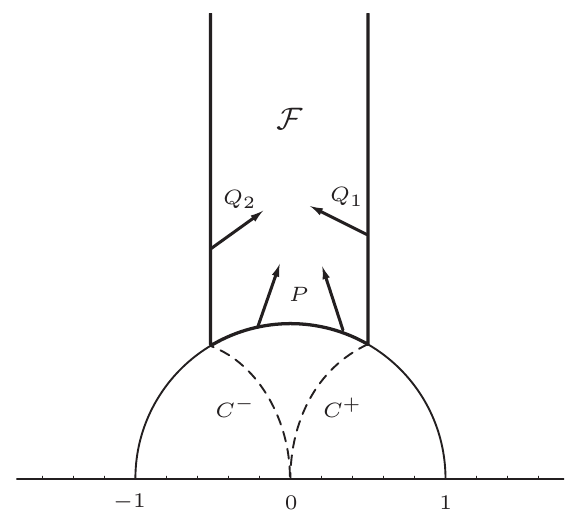}\hspace{20pt}
\includegraphics{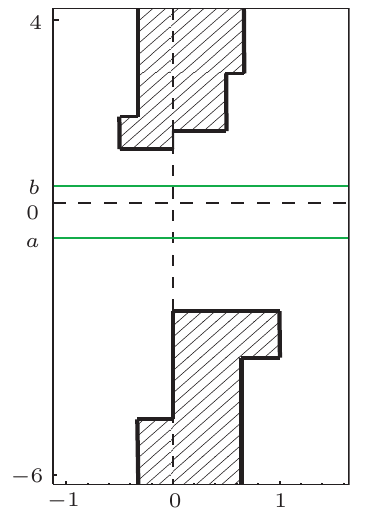}
\caption{The cross-section (left) and its $\Lambda_{a,b}$ parametrization (right)}
\label{fig-section}
\end{figure}

\section{Symbolic coding of the geodesic flow via $(a,b)$-continued fractions.}

If $\gamma$ is a geodesic on $\h$, we denote by $\bar\gamma$ the canonical projection of $\gamma$ on $M$. 
For a given geodesic on $M$  that can be reduced in finitely many steps, we can always choose its lift $\g$ to $\h$ to be $(a,b)$-reduced.

The following theorem provides the basis for coding geodesics on the modular surface using $(a,b)$-coding sequences.

\begin{thm} \label{coding}
Let $\g$ be an $(a,b)$-reduced geodesic on $\h$ and $\bar\g$ its projection to $M$. Then
\begin{enumerate}
\item each geodesic segment of $\bar\g$ between successive returns to the cross-section $C_{a,b}$ produces an $(a,b)$-reduced geodesic on $\h$, and each reduced geodesic $\sz$-equivalent to $\g$ is obtained this way;
\item the first return of $\bar\g$ to the cross-section $C_{a,b}$ corresponds to a left shift of the coding sequence of $\gamma$.
\end{enumerate}
\end{thm}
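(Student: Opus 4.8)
The plan is to establish the commutative diagram relating the first return map $R$ on the cross-section $C_{a,b}$ to the reduction map $R_{a,b}$ on its parametrization $\Lambda_{a,b}$, and then read off both assertions. Since $\pi\circ\varphi:\Lambda_{a,b}\to C_{a,b}$ is an injective parametrization and the bi-infinite coding sequence was defined precisely so that $R_{a,b}$ acts as the left shift on $\lfloor\gamma\rceil$ (via $w_{k+1}=ST^{-n_k}w_k$, $u_{k+1}=ST^{-n_k}u_k$), statement (2) is equivalent to the identity $\pi\circ\varphi\circ R_{a,b}=R\circ\pi\circ\varphi$. Thus the entire theorem reduces to showing that the next return of $\bar\gamma$ to $C_{a,b}$ after the cross-section point of $\gamma_0=\gamma$ is exactly the cross-section point of $\gamma_1=R_{a,b}\gamma_0=ST^{-n_0}\gamma_0$.

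To prove this I would follow the lift of $\bar\gamma$ along $\gamma_0$ and track the fundamental domains it traverses. Starting from the cross-section point of $\gamma_0$ and flowing toward the attracting endpoint $w_0$, the geodesic crosses the vertical sides of $\F$, successively replacing the current lift by $T^{-1}\gamma_0,T^{-2}\gamma_0,\dots,T^{-n_0}\gamma_0$ (the intermediate geodesics of Remark \ref{intermediate}, with the obvious sign change when $n_0<0$), and then crosses the circular side, applying $S$ to produce $ST^{-n_0}\gamma_0=\gamma_1$. The sign of $n_0$ determines whether the vertical crossings are through the right or the left side of $\F$, and the analysis splits according to whether the cross-section point of $\gamma_0$ lies on $C$ or on $C^-\cup C^+$, exactly as in the case distinctions of Proposition \ref{geod-reduced}; the explicit containments \eqref{upper} and \eqref{lower} of $\Lambda_{a,b}$ pin down in each case which side is crossed and when the geodesic meets $C$, $C^-$ or $C^+$ again. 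By Remark \ref{intermediate} none of the intermediate geodesics $T^{-s}\gamma_0$, $1\le s\le n_0$, is $(a,b)$-reduced, so none of these crossings is a return to $C_{a,b}$; on the other hand $\gamma_1$ is reduced by the reduction theory underlying Theorem \ref{main}, so its cross-section point is a genuine return, and since every geodesic strictly between is non-reduced, it is the first one. This yields the required identity and hence (2), as passing to $\gamma_1$ advances the index, i.e.\ shifts $\lfloor\gamma\rceil$ once to the left.

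Statement (1) then follows from the bijectivity of $R_{a,b}:\Lambda_{a,b}\to\Lambda_{a,b}$ asserted after Theorem \ref{main}. Extending the construction to negative times, the returns of $\bar\gamma$ to $C_{a,b}$ in both directions are exactly the points $\pi\circ\varphi(u_k,w_k)$, $k\in\Z$, so the lift of each segment of $\bar\gamma$ between successive returns completes to one of the reduced geodesics $\gamma_k$. Conversely, if $\gamma'$ is any $(a,b)$-reduced geodesic $\sz$-equivalent to $\gamma$, then $\pi\circ\varphi(\gamma')$ lies on $\bar\gamma$ and in $C_{a,b}$, hence is one of these returns, so $\gamma'=\gamma_k$ for some $k$; here the injectivity of $\pi\circ\varphi$ (established via Remark \ref{intermediate}) is what guarantees that distinct reduced representatives correspond to distinct cross-section points.

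The main obstacle I expect is the case analysis in the second paragraph: verifying in each configuration (cross-section point on $C$ versus on $C^\pm$, the sign of $n_0$, and the degenerate boundary situations where $R_{a,b}$ fails to be bijective) that the geodesic really crosses the sides of $\F$ in the claimed order and meets the cross-section curves again precisely at $\gamma_1$ and not before. This is a finite but delicate bookkeeping that rests entirely on the explicit shape of $D_{a,b}$, and hence of $\Lambda_{a,b}$, recorded in \eqref{upper}--\eqref{lower}.
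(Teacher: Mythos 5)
Your skeleton matches the paper's: reduce everything to showing that the first return of $\bar\gamma$ after the cross-section point of $\gamma_0$ is the cross-section point of $\gamma_1=ST^{-n_0}\gamma_0$, prove this by tracking the projected geodesic through $\F$ and ruling out intermediate returns, then induct in both directions and invoke injectivity of $\pi\circ\varphi$ for the converse half of (1). The gap is in the step you dispose of in one line: ``by Remark \ref{intermediate} none of the intermediate geodesics $T^{-s}\gamma_0$, $1\le s\le n_0$, is $(a,b)$-reduced, so none of these crossings is a return to $C_{a,b}$.'' That implication is a non sequitur. A crossing of $\partial\F$ is a return to $C_{a,b}$ if and only if the inward-pointing vector lies in $P\cup Q_1\cup Q_2$, and for $Q_1$ and $Q_2$ the defining condition is \emph{not} that the entering lift $\gamma$ be reduced, but that $S\gamma$ or $TS\gamma$ (respectively $S\gamma$ or $T^{-1}S\gamma$) be reduced. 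So at a vertical-side crossing whose entering lift is $T^{-s}\gamma_0$ you must rule out reducedness of $ST^{-s}\gamma_0$, $TST^{-s}\gamma_0$, $T^{-1}ST^{-s}\gamma_0$ --- geodesics about which Remark \ref{intermediate} says nothing. Moreover, your traversal picture (only vertical crossings, then a single circular one) is wrong in general: a reduced geodesic can dip inside the intermediate circles $|z-s|=1$ (e.g.\ when $b<1$ and $0<u_0<1$), so $\bar\gamma$ also enters and exits $\F$ through its circular boundary at intermediate times; this is precisely why the paper lists the segments in $\F$ as intersections with $T^{-1}\gamma_0,\dots,T^{-n_0+1}\gamma_0$, \emph{either} $ST^{-n_0+1}\gamma_0$ \emph{or} $T^{-n_0}\gamma_0$, and \emph{either} $\gamma_0$ \emph{or} $ST^{-1}\gamma_0$, and then needs its cases (a)--(c). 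At those circular crossings, membership in $P$ requires ruling out reducedness of $ST^{-s}\gamma_0$, again not covered by the remark.

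Closing this gap is exactly the content of the paper's case analysis, and it rests on attractor estimates rather than on Remark \ref{intermediate}: for $1\le s<n_0$ one has $T^{-s}w_0>b$, hence $ST^{-s}w_0>-1/b$, so $(ST^{-s}u_0,ST^{-s}w_0)\notin D_{a,b}$ and therefore $T^{-1}ST^{-s}\gamma_0$ is not reduced (case (a)); similar estimates dispose of cases (b) and (c); and separate arguments handle the configurations where $z_0\in C^-$ (forcing $a<-1$, $n_0=1$) and where $ST^{-1}\gamma_0$ exits $\F$ through its left vertical side (forcing $n_0=2$), in which the whole excursion of $\bar\gamma$ in $\F$ is a single segment running from one vertical side to the other. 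You correctly anticipate that a case analysis based on \eqref{upper}--\eqref{lower} is the crux, but the tool you name to carry it out cannot do the job: the geodesics that must be shown non-reduced are the $S$-, $TS$-, and $T^{-1}S$-images of the intermediate lifts, and non-reducedness of $T^{-s}\gamma_0$ itself implies nothing about them.
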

\begin{proof}
$(1)$ By lifting a geodesic segment on $M$ starting on $C_{a,b}$ to $\h$, we obtain a segment of a geodesic $\g$ on $\h$ that is reduced by the definition of the cross-section $C_{a,b}$. 
A coding sequence of $\g=\g_0$  that connects $u_0$ to $w_0=\lfloor n_0,n_1,\dots\rceil_{a,b}$,
\[
\lfloor\g_0\rceil=\lfloor\dots,n_{-2},n_{-1},n_0,n_1,n_2,\dots\rceil,
\]
 is obtained by extending the sequence of digits of $w_0$ to the past as explained in the previous section.

Let us assume that $w_0>0$, i.e. $n_0\geq 1$. The case $w_0<0$ can be treated similarly.
The geodesic $ST^{-n_0}\g_0=\g_1$ is reduced by Theorem \ref{main}. Let $z_0$ and $z_1$ be the cross-section points on $\g_0$ and $\g_1$, respectively. Then $z'_1=T^{n_0}Sz_1\in\g_0$; it is the intersection point of $\g_0$ with the circle $|z-n_0|=1$.
We will show that the geodesic segment of $\g_0$, $[z_0,z'_1]$ projected to $M$ is  the segment between two successive returns to the cross-section $C_{a,b}$. Since $ST^{-n_0}(z'_1)=z_1$ is the cross-section point on $\g_1$,  the geodesic segment $[z_0,z'_1]$ projected to $M$ is between two returns to $C_{a,b}$. Recall that a geodesic in $\F$ consists of countably many oriented geodesic segments between consecutive crossings of the boundary of $\F$ that are obtained by the canonical projection of $\g_0$ to $\F$.

If $z_0$ is the intersection of $\g_0$ with $C$, there are two possibilities. First, when $\g_0$ intersects $\F$ or $\g_0$ does not intersect $\F$ and   
$ST^{-1}\g_0$ exists $\F$ through it circular boundary, and, second, when $\g_0$ does not intersect $\F$ and $ST^{-1}\g_0$ exists $\F$ through it left vertical boundary. In the first case
the segments in $\F$ are represented by the intersection with $\F$ of the
following geodesics in $\h$: $T^{-1}\g_0,\,T^{-2}\g_0,\, \dots , \,T^{-n_0+1}\g_0$, either  $ST^{-n_0+1}\g_0$ or $T^{-n_0}\g_0$,  and either $\g_0$, or $ST^{-1}\g_0$. 

Suppose that
 for some intermediate point $z\in\g_0$, $z\in [z_0,z'_1]$ the unit vector tangent to $\g_0$ at $z$, $(z,\zeta)$ is projected to $C_{a,b}$. By tracing the geodesic $\g_0$ inside  $\F$, we see that $(z,\zeta)$ must be projected to $(\bar z,\bar\zeta)$ with $\bar z$ on the boundary of $\F$ and $\bar\zeta$ directed inward. Then the geodesic through $(\bar z,\bar\zeta)$
\begin{enumerate}
\item [(a)]enters $\F$ through its vertical boundary and exits it also through the vertical boundary,
\item [(b)]enters $\F$ through its vertical boundary and exits through its circular boundary, or
\item [(c)]enters $\F$ through its circular boundary and exits through its vertical boundary.
\end{enumerate}
The following assertions are implied by the analysis of the attractor $D_{a,b}$.
In case (a), $T^{-1}ST^{-s}\g_0$ is not reduced for $1\leq s<n_0$
since $s<n_0$, $T^{-s}w_0>b$, hence  $ST^{-s}w_0>-\frac1{b}$, i.e. $(ST^{-s}u_0,ST^{-s}w_0)\notin D_{a,b}$, therefore 
\[
(T^{-1}ST^{-s}u_0,T^{-1}ST^{-s}w_0)\notin \Lambda_{a,b}.
\]
In case (b), either the segment $T^{-n_0}\g_0$ exits through the circular boundary of $\F$, $ST^{-n_0}\g_0=\g_1$ is reduced, and we reached the point $z_1$ on the cross-section.
If the segment $T^{-n_0+1}\g_0$ intersects the circular boundary of $\F$,   $ST^{-n_0+1}\g_0$ is not reduced. In case (c), $ST^{-n_0+1}$ is not reduced.

In the second case the first digit of $w_0$, $n_0=2$. This is because $n_0=1$ would imply $b+1<w<-\frac1{b-1}$ which is impossible. Thus $ST^{-2}\g_0=\g_1$ is reduced. 
In this case the geodesic in $\F$ consists of the intersection with $\F$ of a single geodesic $ST^{-1}\g_0$ that enters $\F$ through its right vertical and leave it through its left vertical boundary, since $(TS)T(ST^{-1}\g_0)=ST^{-2}\g_0=\g_1$ is reduced.
In all cases the geodesic segment $[z_0,z'_1]$ projected to $M$ is between two consecutive returns to $C_{a,b}$.

If $z_0\notin C$, by Proposition \ref{geod-reduced}, since $w_0>0$, $z_0\in C^-$. 
Notice that this implies that $a<-1$ and $n_0=1$, and $\g_1=ST^{-1}\g_0$ is reduced. In this case the geodesic in $\F$ also consists of the intersection with $\F$ of a single geodesic $S\g_0$ that enters $\F$ through its right vertical and leave it through its left vertical boundary, since $(TS)T(S\g_0)=ST^{-1}\g_0=\g_1$ is reduced, and hence the geodesic segment $[z_0,z'_1]$ projected to $M$ is between two consecutive returns to $C_{a,b}$.
 Continuing this argument by induction in both positive and negative direction,  we obtain a bi-infinite sequence of points
 \[
(\dots , z_{-2},z_{-1},z_0,z_1,z_2,\dots),
\]
where $z_k$ is the cross-section point of the reduced geodesic $\g_k$ in the sequence of $\g_0$, that represents the sequence of all successive returns of the geodesic $\g_0$ in $M$ to the cross-section $C_{a,b}$. 

If $\tilde \g_0$ is a reduced geodesic in $\h$, $\sz$-equivalent to $\g_0$, then both project to the same geodesic on $M$. Therefore,  the cross-section point $\tilde z_0$ of $\bar \g_0$ projects on $C_{a,b}$ to a cross-section point $z_k$ of $\g_k$ for some $k$. This completes the proof of (1).

$(2)$ Since $\g_1=ST^{-n_0}\g_0$, $w_1=ST^{-n_0}w_0=\lfloor n_1,n_{2},\dots\rceil_{a,b}$. The first digit of the past is evidently $n_0$, and the remaining digits are the same as for $\g_0$. Thus (2) follows.
\end{proof}
The following corollary is immediate.
\begin{cor}\label{equivalentgeod}
If $\g'$ is $\sz$-equivalent to $\g$, and both geodesics can be reduced in finitely many steps, then the coding sequences of $\g$ and $\g'$ differ by a shift. 
\end{cor}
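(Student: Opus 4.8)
The plan is to read the statement off Theorem \ref{coding} directly, since that theorem already contains all the substance; the corollary merely repackages its two parts. First I would reduce: by hypothesis each of $\g$ and $\g'$ becomes $(a,b)$-reduced after finitely many applications of the reduction map $R_{a,b}$, and since $R_{a,b}$ sends $w$ to $ST^{-n}w$ it simply strips off the leading digit and therefore shifts the coding sequence. Hence it is enough to treat $\g$ and $\g'$ as reduced, the passage from a reducible geodesic to its reduced representative contributing only a finite shift.

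Assume then that $\g$ and $\g'$ are reduced and $\sz$-equivalent. Their projections to $M$ coincide, since they lie on the same orbit $\bar\g$ of the geodesic flow. By Theorem \ref{coding}(1), every reduced geodesic $\sz$-equivalent to $\g$ is realized as one of the lifts $\g_k$ cut out by the successive returns of $\bar\g$ to the cross-section $C_{a,b}$; in particular $\g'=\g_k$ for some $k\in\Z$. By Theorem \ref{coding}(2), advancing $\bar\g$ by one return to $C_{a,b}$ --- that is, passing from $\g_j$ to $\g_{j+1}$ --- performs a single left shift on the coding sequence, so iterating $k$ times yields $\lfloor\g'\rceil=\lfloor\g_k\rceil=\sigma^k\lfloor\g\rceil$. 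This is exactly the assertion that the two coding sequences differ by a shift.

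I expect no genuine obstacle here: the argument is bookkeeping layered on top of Theorem \ref{coding}. The only points deserving a word of care are (i) that the preliminary reduction of a non-reduced but reducible geodesic indeed acts as a shift, which is immediate from the formula for $R_{a,b}$ together with the uniqueness of $(a,b)$-expansions established in Section 3, and (ii) matching sign conventions so that a forward return corresponds to a left shift, as fixed in Theorem \ref{coding}(2). Neither affects the structure of the proof, which is why the corollary is correctly labeled immediate.
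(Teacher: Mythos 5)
Your proposal is correct and matches the paper's approach: the paper treats this corollary as immediate from Theorem \ref{coding}, and your argument simply spells out why --- parts (1) and (2) identify every reduced geodesic $\sz$-equivalent to $\g$ with some return $\g_k$ of $\bar\g$ to the cross-section and make each return a left shift, while the preliminary reduction of a reducible geodesic contributes only a finite shift. Nothing in your write-up deviates from or adds to what the paper intends; it is a faithful unpacking of the same reasoning.
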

It implies a very important property of $(a,b)$-continued fractions that escapes a direct proof.
\begin{cor}\label{tail} {\bf (The Tail Property)} For almost every pair of real numbers  that are $\sz$-equivalent,  the ``tails" of their $(a,b)$-continued fraction expansions coincide.
\end{cor}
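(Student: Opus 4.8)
The plan is to deduce the Tail Property directly from Corollary \ref{equivalentgeod} by realizing the two $\sz$-equivalent numbers as attracting endpoints of two $\sz$-equivalent reduced geodesics. Suppose $w$ and $w'=gw$ with $g\in\sz$ are $\sz$-equivalent irrationals. First I would attach to $w$ a geodesic $\g$ from some repelling endpoint $u$ to $w$, chosen so that both $\g$ and its image $g\g$ (the geodesic from $gu$ to $w'$) can be reduced in finitely many steps; the latter is automatically $\sz$-equivalent to $\g$. By construction the future part of the coding sequence $\lfloor\g\rceil$ is precisely the $(a,b)$-expansion of $w$, and the future part of $\lfloor g\g\rceil$ is the $(a,b)$-expansion of $w'$.

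The measure-theoretic input enters in choosing $u$. By Theorem \ref{main}(3), almost every point of $\bar\R^2\setminus\Delta$ is carried into the attractor $D_{a,b}$ after finitely many iterations of $F_{a,b}$, i.e. the corresponding geodesic is reducible. By Fubini, for almost every $w$ the set of $u$ for which $\g$ is reducible has full measure; since $g$ acts as a M\"obius transformation and hence preserves Lebesgue-null sets, the set of $u$ for which $g\g$ is also reducible likewise has full measure. Intersecting these two full-measure conditions, for almost every $w$ I can select a single $u$ making both $\g$ and $g\g$ reducible. As $\sz$ is countable, discarding a null set of $w$ for each $g$ still leaves a full-measure set of $w$ that works simultaneously for all $g\in\sz$.

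With such a $u$ in hand, Corollary \ref{equivalentgeod} gives that the coding sequences of $\g$ and $g\g$ differ by a shift, say $\lfloor g\g\rceil=\sigma^k\lfloor\g\rceil$. Reading off the future parts, this means the $(a,b)$-expansion of $w'$ is the shift by $k$ of the $(a,b)$-expansion of $w$: for $k\ge 0$ the expansion of $w'$ is a tail of that of $w$, and for $k<0$ the reverse holds. In either case the two expansions share a common tail, which is exactly the assertion.

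The main obstacle I anticipate is the bookkeeping in the previous paragraph --- making the ``almost every pair'' precise and confirming that a single $u$ can be chosen to reduce both $\g$ and $g\g$ --- rather than any genuinely new idea, since the dynamical content is already packaged in Corollary \ref{equivalentgeod}. One should also dispose of the harmless edge cases (the standing hypotheses $u\neq 0$ and $u\neq w$, and the exceptional boundary segments of $\Lambda_{a,b}$ excluded in Theorem \ref{coding}), but these only remove further null sets and do not affect the full-measure conclusion.
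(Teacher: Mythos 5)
Your argument is correct and is essentially the paper's own: the paper treats the Tail Property as an immediate consequence of Corollary \ref{equivalentgeod}, with the exceptional set being exactly the one from Theorem \ref{main}(3), and your Fubini/countability bookkeeping merely makes the ``almost every pair'' quantifier explicit. One slight imprecision worth noting: for a geodesic that is only reducible in finitely many steps (not already reduced), the future part of its coding sequence is a \emph{tail} of the $(a,b)$-expansion of $w$ rather than the expansion itself, since the reduction steps shift off finitely many digits --- but this is harmless because only tails are at stake.
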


\begin{rem}
The set of exceptions in Corollary \ref{tail} is the same as the one described in Theorem \ref{main}(3).
\end{rem}
Thus we can talk about {\em coding sequences of geodesics on $M$}. 
To any geodesic $\g$ that can be reduced in finitely many steps we associate  the coding sequence (\ref{codingseq}) of a reduced geodesic $\sz$-equivalent to it. Corollary \ref{equivalentgeod} implies that this definition does not depend on the choice of a particular representative: sequences for equivalent reduced geodesics differ by a shift.

Let $X_{a,b}$ be the closure of the set of admissible sequences and $\sigma$ be the left shift map. The coding map $\operatorname{Cod}:X_{a,b}\rightarrow C_{a,b}$ is defined by
\begin{equation}\label{Cod}
\operatorname{Cod} (\lfloor\dots,n_{-2},n_{-1},n_0,n_1,\dots\rceil)=(1/(n_{-1},n_{-2},\dots ),\lfloor n_0,n_1,\dots\rceil_{a,b}). 
\end{equation}
This map is essentially bijective.

The symbolic system $(X_{a,b},\sigma)\subset (\N^\Z,\sigma)$ is defined on the
infinite alphabet $\N\subset\Z\setminus\{0\}$.
The product topology on $\N^\Z$ is induced by the distance
function
\[
d(x,x')=\frac 1{m}\,,
\]
where $x=(n_i), x'=(n'_i)\in \N^\Z$, and $m=\max \{k \mid
n_i=n'_i\text{ for } |i|\leq k\}$.
\begin{prop} \label{prop:cont}
The map $\operatorname{Cod}$ is continuous.
\end{prop}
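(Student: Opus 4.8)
The plan is to reduce the statement to the continuity of the two continued-fraction maps that produce the coordinates in \eqref{Cod}, and then to invoke the continuity of the geometric parametrization $\pi\circ\varphi\colon\Lambda_{a,b}\to C_{a,b}$. Writing $\operatorname{Cod}(x)=(u,w)$ for $x=\lfloor\dots,n_{-1},n_0,n_1,\dots\rceil$, the attracting coordinate $w=\lfloor n_0,n_1,\dots\rceil_{a,b}$ depends only on the ``future'' digits $n_0,n_1,\dots$, while the repelling coordinate $u=1/(n_{-1},n_{-2},\dots)$ depends only on the ``past'' digits. Since a reduced geodesic, and hence its cross-section point $z$ together with the tangent vector $\zeta$ at $z$, depends continuously on the endpoint pair $(u,w)$, the map $\pi\circ\varphi$ is continuous; it therefore suffices to prove that the map $X_{a,b}\to\bar\R^2$, $x\mapsto(u,w)$, is continuous for the topology induced by the metric $d$.

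Recall that $d(x,x')=1/m$ exactly when $n_i=n'_i$ for all $|i|\le m$ but not for all $|i|\le m+1$, so that $x'\to x$ means longer and longer central blocks of digits agree. Fix $x$ and consider the future coordinate. If $x'$ agrees with $x$ on $0\le i\le k$, then $w(x)$ and $w(x')$ lie in the common fundamental interval $\Delta_k=\Delta(n_0,\dots,n_k)$, the set of all $(a,b)$-expansions beginning with these $k+1$ digits; the intervals are nested, $\Delta_0\supset\Delta_1\supset\cdots$, and by \cite[Theorem 2.1]{KU3} the convergent $r_k=\lfloor n_0,\dots,n_k\rceil_{a,b}\in\Delta_k$ tends to $w(x)$. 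Consequently the continuity of $x\mapsto w$ at $x$ is \emph{equivalent} to the assertion $\operatorname{diam}\Delta_k\to0$, since then $|w(x)-w(x')|\le\operatorname{diam}\Delta_k$. The same reduction applies verbatim to the past coordinate: the sign condition proved above (that a digit $\pm1$ is followed by an integer of the opposite sign) guarantees, via \cite[Lemma 1.1]{KU}, that the minus continued fraction $(n_{-1},n_{-2},\dots)$ converges to $1/u$, and its convergents are again endpoints of nested fundamental intervals $\Delta_{-k}$, so that continuity of $x\mapsto u$ reduces to $\operatorname{diam}\Delta_{-k}\to0$.

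The main obstacle is thus to upgrade the pointwise convergence of the convergents to the genuine shrinking of the cylinders, i.e.\ to a uniform estimate over all sequences sharing a given initial block. This is where the expanding nature of the one-dimensional map $f_{a,b}$ enters: writing $\Delta_k=g_k(J)$ with $g_k=T^{n_0}ST^{n_1}S\cdots T^{n_k}S$ and $J$ the bounded range of admissible tails, one bounds $\operatorname{diam}g_k(J)$ by the contraction of the composed inverse branches, the very expansivity that yields the absolutely continuous invariant measure in Section~6. The only delicate sequences are those approaching the indifferent behaviour (tails of $2$'s); these correspond precisely to the excluded set of Theorem~\ref{main} and to rational endpoints, and do not occur for the reduced geodesics parametrized by $\Lambda_{a,b}$. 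Finally, no discontinuity arises in $\pi\circ\varphi$ at the transition between geodesics meeting $C$ and those meeting $C^-\cup C^+$ in Proposition~\ref{geod-reduced}, because the arcs $C$, $C^-$, $C^+$ join along the elliptic fixed points on $|z|=1$, so the cross-section point varies continuously through the switch; composing with the continuous projection $\pi$ then completes the argument.
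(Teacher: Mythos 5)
Your reduction (continuity of $x\mapsto(u,w)$ plus continuity of the geometric parametrization) is the same skeleton as the paper's proof, but the way you fill in each half has a genuine gap. For the endpoint map, your mechanism for shrinking of cylinders is ``the very expansivity that yields the absolutely continuous invariant measure in Section~6''. That expansivity is not available in all the cases the proposition covers: Theorem~\ref{Bern} assumes $a\neq0$ and $b\neq0$ precisely because for parameters such as $(a,b)=(-1,0)$ (the classical minus continued fractions, explicitly part of this family) the map $\hat f_{a,b}$ has an indifferent fixed point, no iterate is uniformly expanding, and the inverse branches $(T^{2}S)^{k}$ contract only polynomially. Your attempt to excise the problematic sequences fails on two counts: the exceptional set $\mathcal E$ of Theorem~\ref{main} is a set of \emph{parameter pairs} $(a,b)$ on the line $b=a+1$, not a set of sequences or endpoints, so it is irrelevant here; and while an \emph{infinite} tail of $2$'s occurs only for rational endpoints, arbitrarily long \emph{finite} blocks of $2$'s occur in the expansions of irrational endpoints of perfectly good reduced geodesics, and it is exactly on the cylinders containing such blocks that uniform contraction fails. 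So the central estimate is unproved in the parabolic cases. The paper avoids this entirely by invoking the quantitative rate of convergence already established in \cite[Theorem 2.1]{KU3}, which is uniform of order $1/m$ for \emph{all} $(a,b)\in\mathcal P$ (parabolic cases included) and immediately gives $|w(x)-w(x')|<2/m$, with the symmetric estimate for $u$ via \cite[Lemma 1.1]{KU}. (Alternatively, shrinking of cylinders follows with no expansivity at all from uniqueness of expansions: the cylinders are nested intervals, and if their diameters did not tend to $0$ their intersection would contain two distinct points with the same $(a,b)$-expansion, contradicting \cite[Theorem 2.1]{KU3}; but you do not make that argument.)

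The second gap is your final geometric claim. The switch between geodesics meeting $C$ and geodesics meeting only $C^-\cup C^+$ does \emph{not} take place at the elliptic junction points of the arcs: it takes place when an endpoint of the geodesic crosses $\pm1$. There the map $(u,w)\mapsto(z,\zeta)$ genuinely jumps: for $0<u<1$ and $w\to-1^-$ the intersection with $C$ runs off to the ideal endpoint $-1$ of $C$ (so its projection escapes up the cusp of $M$), while for $w\to-1^+$ the cross-section point converges to an interior point of $C^+$ at bounded height; hence $\pi\circ\varphi$ is \emph{not} continuous on all of $\Lambda_{a,b}$, and your composition argument breaks as stated. What saves the proposition is the topology of the sequence space: the transition values $w=\pm1$ are rational, have no infinite $(a,b)$-expansion, and two sequences whose endpoints lie on opposite sides of $w=-1$ already differ in the digit $n_1$, so they are never close in $X_{a,b}$. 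One therefore only needs continuity of the geometric map at endpoint pairs with irrational coordinates, where the intersections with $C$, $C^{\pm}$ are transversal and vary continuously --- this is the (implicit) content of the last step of the paper's proof, and it is the step your justification gets wrong.
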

\begin{proof}
If $d(x,x')<\frac 1{m}$, then the $(a,b)$-expansions of the
attracting end points $w(x)$ and $w(x')$ of the corresponding
geodesics given by (\ref{ww}) have  the same first $m$ digits.
Hence the first $m$ convergents of their $(a,b)$-expansions are the
same, and using the properties of $(a,b)$ continued fraction and the rate of convergence of\cite[Theorem 2.1]{KU3}
we obtain
$|w(x)-w(x')|<\frac {2}{m}$. Similarly, the first $m$ digits in the convergent formal minus continued fraction of
$\frac 1{u(x)}$ and $\frac 1{u(x')}$ are the same, and hence
$|u(x)-u(x')|<\frac {2|u(x)u'(x)|}{m}<\frac 2{m}$. Therefore the
geodesics are uniformly $\frac 2{m}$-close. But the tangent
vectors $v(x), v(x')\in C_{a,b}$ are determined by the intersection
of the corresponding geodesic with the unit circle or the curves $C^+$ and $C^-$. Hence, by
making $m$ large enough we can make   $v(x')$ as close to $v(x)$
as we wish.
\end{proof}

In conclusion, the geodesic flow becomes a special flow over a symbolic dynamical system $(X_{a,b},\sigma)$ on the infinite alphabet $\N\subset\Z\setminus\{0\}$. The ceiling function  $g_{a,b}(x)$ on $X_{a,b}$ coincides with the
time of the first return of the associated geodesic $\gamma(x)$ to
the cross-section $C_{a,b}$. One can establish an explicit formula for $g_{a,b}(x)$ as the function of the end points of the
corresponding geodesic $\gamma(x)$, $u(x)$, $w(x)$,
following the ideas explained in \cite{GK}. If $-1\le a \le 0$ and $0\le b\le 1$, then $g_{a,b}(x)$ is cohomologous to $2\log|w(x)|$; more precisely,
$$
g_{a,b}(x)=2\log |w(x)|+\log h(x)-\log h(\sigma x)
\text{ where }
h(x)=\frac{|w(x)-u(x)|\sqrt{w(x)^2-1}}{w(x)^2\sqrt{1-u(x)^2}}\,.
$$

\section{Dual codes}
We have seen that a coding sequence for a reduced geodesic from $u$ to $w$ (\ref{codingseq}) is comprised from the sequence of digits in $(a,b)$-expansion of $w$ and the ``past", an infinite sequence of non-zero integers, each digit of which depends on $w$ and $u$. In some special cases the ``past" only depends on $u$, and, in fact, it will coincide with the sequence of digits of $1/u$ by using a so-called  {\em dual expansion} to $(a,b)$.

Let $\psi(x,y)=(-y,-x)$ be the reflection of the plane about the line $y=-x$.
\begin{defn}
If  $\psi(D_{a,b})$ coincides with the attractor set $D_{a',b'}$ for some $(a',b')\in \mathcal P$, then the $(a',b')$-expansion is called the {\em dual} expansion to $(a,b)$. 
If $(a',b')=(a,b)$, 
then the 
$(a,b)$-expansion is called {\em self-dual}.
\end{defn}

\begin{example}
The classical situations of $(-1,0)$- and $(-1,1)$-expansions are self-dual. Two more sophisticated examples $(\frac{1-\sqrt{5}}2,\frac{3-\sqrt{5}}2)$ and $(-\frac3{8},\frac2{3})$, respectively, are shown in Figure \ref{fig-self}.
\begin{figure}[htb]
\begin{minipage}[b]{.49\textwidth}
  \begin{center}
    \includegraphics{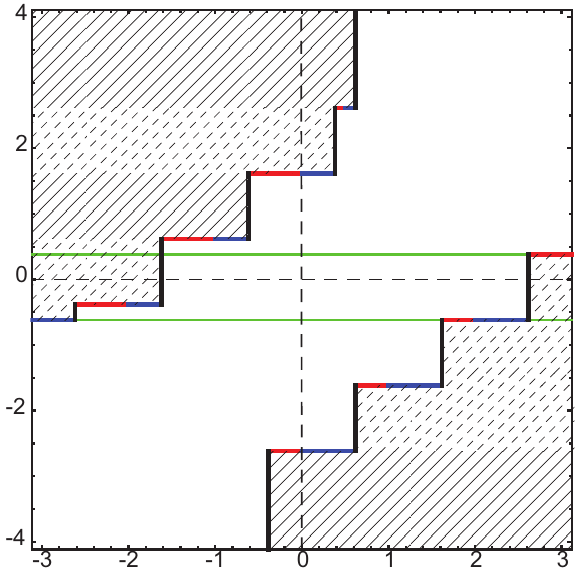}
    \center{$(a,b)=(\frac{1-\sqrt{5}}{2},\frac{3-\sqrt{5}}{2})$}
  \end{center}
  \end{minipage}
  \hfill
  \begin{minipage}[b]{.49\textwidth}\mbox{ }
    \begin{center}
\includegraphics{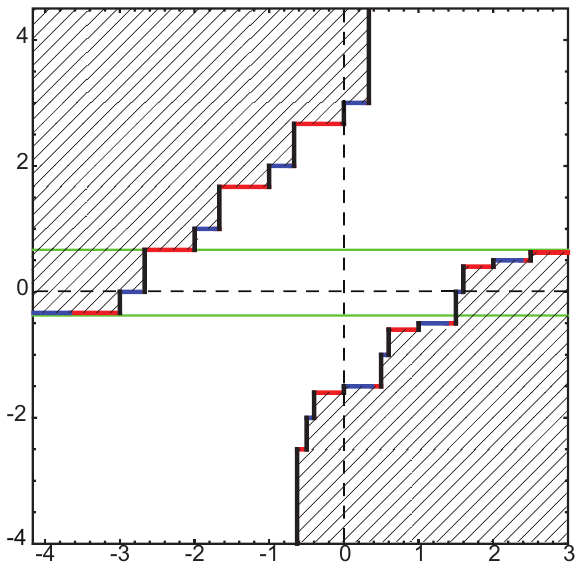}
\center{$(a,b)=(-\frac{3}{8},\frac{2}{3})$}
\end{center}
  \end{minipage}
  \hfill
\caption{Domains of self-dual expansions}
\label{fig-self}
\end{figure}
\end{example}

\begin{example}
The expansions $(-\frac1{n},1-\frac1{n})$, $n\ge 1$, satisfy a weak cycle property and have dual expansions that are periodic. A classical example in this series is the Hurwitz case $(-\frac1{2},\frac1{2})$ whose dual is $(\frac{1-\sqrt{5}}2,\frac{-1+\sqrt{5}}2)$  (see \cite{Hurwitz1,KU2}). Their domains are shown in Figure \ref{fig-dual}.

\begin{figure}[htbp]
\begin{minipage}[b]{.49\textwidth}
    \begin{center}
\includegraphics{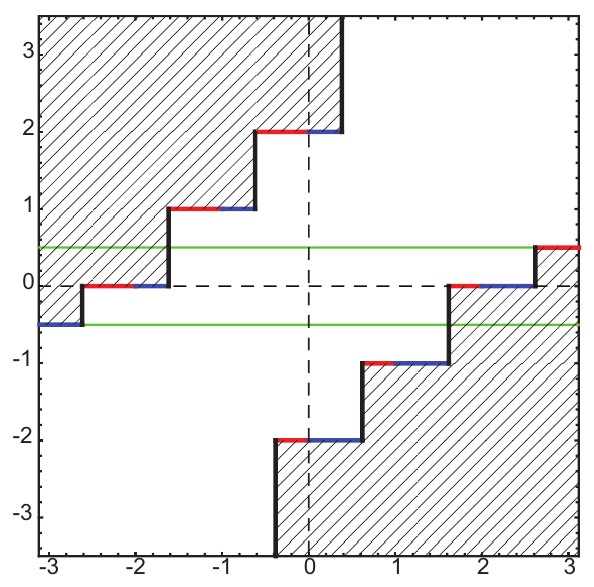}
\center{$(a,b)=(-\frac12,\frac12)$}
\end{center}
 \end{minipage}
  \hfill
  \begin{minipage}[b]{.49\textwidth}\mbox{ }
    \begin{center}
\includegraphics{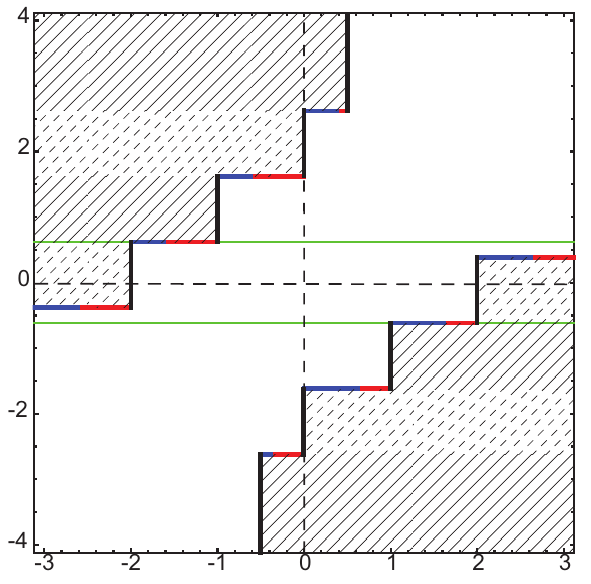}
\center{$(a',b')=(\frac{1-\sqrt{5}}{2},\frac{-1+\sqrt{5}}{2})$}
\end{center}
  \end{minipage}
  \hfill
\caption{Dual expansions}
\label{fig-dual}
\end{figure}
\end{example}

The following result gives equivalent characterizations for an expansion to admit a dual.

\begin{prop}\label{dual-properties} $ $
The following are equivalent:
\begin{itemize}
\item[(i)] the $(a,b)$-expansion has a dual;
\item[(ii)] the boundary of the lower part of the set $D_{a,b}$ does not have $y$-levels with $a<y<0$, and the  boundary of the upper part of the set $D_{a,b}$ does not have $y$-levels with $0<y<b$;
\item[(iii)] $a$ and $b$ do not have the strong cycle property.
\end{itemize}
\end{prop}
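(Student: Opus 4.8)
The plan is to prove the two equivalences $(ii)\Leftrightarrow(iii)$ and $(ii)\Leftrightarrow(i)$, which together give the statement. The basic tool is the explicit description of the step-function boundary of $D_{a,b}$ in terms of the finite set $\LL_{a,b}$ and the cycles of $a$ and $b$ from \cite[Lemmas 5.6, 5.10, 5.11]{KU3}, together with the elementary observation that $\psi(x,y)=(-y,-x)$ fixes the antidiagonal $\{y=-x\}$ pointwise, preserves the diagonal $\Delta$, and carries a non-decreasing step function to a non-decreasing step function, so that $\psi(D_{a,b})$ again has finite rectangular structure. Under $\psi$ a horizontal boundary segment of $D_{a,b}$ at height $c$ becomes a vertical boundary segment of $\psi(D_{a,b})$ at abscissa $-c$, and conversely; this interchange of the two coordinates is what links condition $(ii)$ to the existence of a dual.

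For $(ii)\Leftrightarrow(iii)$ I would read the horizontal levels of the boundary of $D_{a,b}$ directly off the cycle data. The only levels that can land in the open strips $\{a<y<0\}$ of the lower part or $\{0<y<b\}$ of the upper part are contributed by the ends of the cycles, $c_a=f^{m_1}_{a,b}(Sa)=f^{k_1}_{a,b}(Ta)$ and $c_b=f^{m_2}_{a,b}(T^{-1}b)=f^{k_2}_{a,b}(Sb)$: every other element of $\LL_{a,b}$ is an orbit point lying outside $[a,b]$ and, by monotonicity of the step functions, produces only levels with $y\le a$ or $y\ge b$. When the cycle of $a$ is weak one has $c_a=0$, so its level sits exactly on the separating line $y=0$ and not in the open strip; when it is strong, equality of the upper and lower products pins the common end at a nonzero value, forcing $a<c_a<0$. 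The same dichotomy holds for $b$ with the strip $\{0<y<b\}$. Hence $(ii)$ holds precisely when neither $a$ nor $b$ has the strong cycle property, which is $(iii)$; the only real work is the bookkeeping that no further element of $\LL_{a,b}$ intrudes into the strips.

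For $(ii)\Leftrightarrow(i)$ the decisive point is an asymmetry between the two coordinates of $D_{a,b}$. The future (second) coordinate is governed by the $(a,b)$-algorithm and may exhibit a strong cycle, hence interior horizontal levels; but the past (first) coordinate is governed by the universal structure of the formal minus continued fraction, namely the Proposition establishing that $1/u=(n_{-1},n_{-2},\dots)$ converges, whose only constraint is that a digit $\pm1$ be followed by a digit of the opposite sign. This past structure is the same for every member of the family and never degenerates, so the vertical jumps of any family attractor $D_{a',b'}$ avoid the interior strips that its horizontal levels may occupy. Combining this with the $\psi$-interchange gives $(i)\Rightarrow(ii)$: if $\psi(D_{a,b})=D_{a',b'}$, then $\psi(D_{a,b})$, being a family attractor, has no interior vertical jumps, and translating back through $\psi$ this says $D_{a,b}$ has no interior horizontal levels. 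For the converse $(ii)\Rightarrow(i)$ one uses that, under $(ii)$, the reflected region $\psi(D_{a,b})$ has exactly the combinatorial shape of a family attractor: its horizontal levels are the clean vertical jumps of $D_{a,b}$, hence admissible, and it has no interior vertical jumps, so one may read the switch points $a',b'$ off its extremal corners, check that $(a',b')\in\mathcal{P}$, and identify $\psi(D_{a,b})$ with $D_{a',b'}$. Equivalently, on $\psi(D_{a,b})$ the conjugated map $\psi F^{-1}_{a,b}\psi$ agrees with a genuine $F_{a',b'}$ precisely because, absent interior levels, its branch partition by the first coordinate coincides with the partition by $a'$ and $b'$ in the second.

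I expect the main obstacle to be the converse $(ii)\Rightarrow(i)$, namely upgrading the correct combinatorial type of $\psi(D_{a,b})$ to the dynamical identity $\psi(D_{a,b})=\bigcap_{n\ge0}F^n_{a',b'}(\bar\R^2\setminus\Delta)$. This requires verifying that the three branches of $F_{a',b'}$ act on the reflected region exactly as the partition by the levels $a'$ and $b'$ dictates, so that the reflected cycles of $a$ and $b$ become the genuine cycles of $a'$ and $b'$, and that the rectangular region is both $F_{a',b'}$-invariant and minimal, so that it coincides with the attractor rather than merely containing it. Pinning down the precise asymmetry between past and future boundaries from the description in \cite{KU3} is the key structural input that makes this step go through.
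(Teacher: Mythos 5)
Your outline of (ii)$\Rightarrow$(i) does follow the paper's actual proof quite closely (read $a'$ and $b'$ off the vertical rays $x=1-b'$ and $x=-1-a'$ of the boundary of $D_{a,b}$, check $(a',b')\in\mathcal P$, establish $\Lambda_{a,b}=D_{a,b}\cap\{-b'\le u\le -a'\}$, and verify branch by branch that $\psi\circ F_{a,b}^{-1}\circ\psi^{-1}$ is given by the formula \eqref{Fab} with parameters $(a',b')$). But the other two implications rest on structural claims that are false, so the proposal does not constitute a proof. For (ii)$\Leftrightarrow$(iii) --- which the paper does not reprove but quotes from Theorems 4.2 and 4.5 of \cite{KU3} --- your mechanism is backwards: in the paper's setup the end of a cycle belongs to $\LL_{a,b}$ if and only if it equals $0$, i.e.\ exactly when the cycle is \emph{weak}, so a strong-cycle end is never a boundary level at all; this contradicts your claim that a strong cycle produces an interior level at its end $c_a\in(a,0)$. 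Your claims that only cycle ends can land in the open strips, and that every other element of $\LL_{a,b}$ lies outside $[a,b]$, are also wrong. A concrete counterexample is the paper's own Figure 1 pair $(a,b)=(-\frac45,\frac25)$, covered by the case of Section 7 with $m=2$: both $a$ and $b$ have \emph{strong} cycles, with ends $c_a=-4$ and $c_b=2$, neither of which lies in $(a,0)\cup(0,b)$; the levels that violate (ii) are the non-end orbit points $-\frac1b+2=-\frac12\in(a,0)$ (lower part) and $-\frac1a-1=\frac14\in(0,b)$ (upper part), and $b-1=-\frac35\in[a,b]$ is yet another non-end element of $\LL_{a,b}$ occurring as a level. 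So the ``bookkeeping'' you postpone is exactly the content of the cited structure theorems and cannot be done along the lines you describe.

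The implication (i)$\Rightarrow$(ii) has a similar problem: it hinges on an alleged universal asymmetry --- that the vertical jumps of \emph{any} attractor $D_{a',b'}$ avoid the relevant interior strips --- and the only justification offered is the proposition on convergence of the formal minus continued fraction of $1/u$. That proposition constrains the admissible combinatorics of the past digits (a digit $\pm1$ must be followed by a digit of the opposite sign); it says nothing about where the vertical segments of the boundary of an attractor sit, so this step is unsupported, and any correct version of such an asymmetry would itself have to be extracted from the structure theorems of \cite{KU3} rather than asserted. The paper argues differently, by contraposition: if (ii) fails, then (via the equivalence with (iii)) some parameter has a strong cycle, the reflection $\psi(D_{a,b})$ then has a vertical jump strictly inside the horizontal strip on which a putative $F_{a',b'}$ acts by $T^{-1}$, and the rectangle abutting that jump (the black rectangle of Figure \ref{noninj}) is carried by $T^{-1}$ outside $\psi(D_{a,b})$; hence $F_{a',b'}$ is not essentially bijective on $\psi(D_{a,b})$, so by Theorem \ref{main}(1) this set cannot be the attractor for any $(a',b')\in\mathcal P$. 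Your closing worry --- upgrading the branchwise conjugation to the identity $\psi(D_{a,b})=D_{a',b'}$ --- concerns the one implication where your plan and the paper already agree; the fatal gaps are the two described above.
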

\begin{proof} 
If the $(a,b)$-expansion has a dual $(a',b')$-expansion, then the parameters $a',b'$ are obtained from the boundary of $D_{a,b}$ as follows: the right vertical boundary of the upper part of $D_{a,b}$ is the ray  $x=1-b'$,  and the left vertical boundary of the lower part of $D_{a,b}$
is the ray $x=-1-a'$. 
Now assume that (ii) does not hold. Then at least one of the parameters $a, b$ has the strong cycle property, and either the left boundary of the upper part of $\Lambda_{a,b}$ or the right  boundary of the lower part of $\Lambda_{a,b}$ is not a straight line. Assume the former. Then the reflection of $D_{a,b}$ with respect to the line $y=-x$ is not $D_{a',b'}$ since the map $F_{a',b'}$  is not bijective on it: the black rectangle in Figure \ref{noninj} belongs to it, but its image under $T^{-1}$, colored in grey, does not. Thus (i)$\Rightarrow$(ii).

\begin{figure}[htb]
\includegraphics{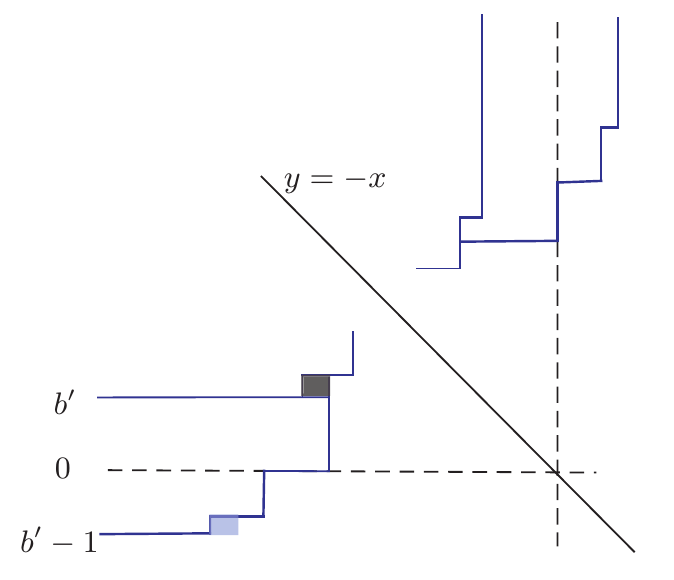}
\caption{Dual expansions and $D_{a,b}$}
\label{noninj}
\end{figure}

Conversely, let
the vertical line $x=1-b'$ be the right boundary
of the upper part of $D_{a,b}$ and
the vertical line $x=-1-a'$ be the left boundary of the lower part of $D_{a,b}$.
Let $[x_a,\infty]\times\{a\}$ be the intersection of $D_{a,b}$ with the horizontal line at the level $a$, and $[-\infty,x_b]\times\{b\}$  be the intersection of $D_{a,b}$ with the horizontal line at the level $b$. Then $a'=\frac1{x_b}$ and $b'=\frac1{x_a}$.
We also see that $1-b'=-\frac1{t}$, where $t=x_b$ or $t<x_b$ if $[t,x_b]\times\{0\}$ is a segment of the boundary of $D_{a,b}$. Then $-b'+1=-\frac1{t}\leq a'$, which implies $b'-a'\geq 1$. By Lemma 5.6 of \cite{KU3} $x_b\leq -1$ and $x_a\geq 1$, therefore 
\begin{equation}\label{De}
-1\leq a'\leq 0 \leq b'\leq 1,
\end{equation}
and
\begin{equation}\label{Lambda-strip}
\Lambda_{a,b}=D_{a,b}\cap\{(u,w)\in \bar\R^2 : -b'\leq u\leq -a'\}.
\end{equation}

We now show that $\psi(D_{a,b})=D_{a',b'}$ is the attractor for $F_{a',b'}$, where 
\begin{equation}\label{eq-dual}
F_{a',b'}=\psi\circ F^{-1}_{a,b}\circ\psi^{-1}.
\end{equation}
For $(u,w)\in D_{a',b'}$ with $a'<w<b'$ we have $\psi^{-1}(u,w)=(-w,-u)$ with $-b'<u<-a'$, so $\psi^{-1}(u,w)\in \Lambda_{a,b}$ by (\ref{Lambda-strip}), hence $F^{-1}_{a,b}(-w,-u)=(1/w,1/u)$, and $F_{a',b'}(u,w)=(-1/u,-1/w)$. 
For $(u,w)\in D_{a',b'}$ with $w>b'$ we have $\psi^{-1}(u,w)=(-w,-u)$ with $u<-b'$, so $F^{-1}_{a,b}(-w,-u)=(-w+1,-u+1)$, and $F_{a',b'}(u,w)=(u-1,w-1)$. 
Similarly, for $(u,w)\in D_{a',b'}$ with $w<a'$ we have $\psi^{-1}(u,w)=(-w,-u)$ with $u>-a'$, so $F^{-1}_{a,b}(-w,-u)=(-w-1,-u-1)$, and $F_{a',b'}(u,w)=(u+1,w+1)$. 
This proves that (ii)$\Rightarrow$(i).

Notice that  (ii) and (iii) are equivalent by Theorems 4.2 and 4.5 of \cite{KU3}.
\end{proof}

\begin{rem}\label{rem-dual}
Notice that if an $(a,b)$-expansion has a dual, then $-1\leq a\leq 0 \leq b\leq 1$. This follows from (\ref{De}) and the fact that the relation of duality is symmetric.
\end{rem}

\begin{thm} \label{coding-dual}If an $(a,b)$-expansion admits a dual expansion $(a',b')$, and $\g_0$ is an $(a,b)$-reduced geodesic, then its coding sequence 
\begin{equation}\label{codseq}
\lfloor \gamma_0\rceil
=\lfloor\dots,n_{-2},n_{-1},n_0,n_1,n_2,\dots\rceil,
\end{equation}
is obtained by juxtaposing the $(a,b)$-expansion of  $w_0=\lfloor n_0,n_1,n_2,\dots\rceil_{a,b}$ and the $(a',b')$-expansion of
$1/u_0=\lfloor n_{-1},n_{-2},\dots\rceil_{a',b'} $. This property is preserved under the left shift of the sequence.
\end{thm}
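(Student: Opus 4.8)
The plan is to isolate the single nontrivial assertion and prove it using the duality relation (\ref{eq-dual}) together with the reduction theory developed in Section 3 for the parameter $(a',b')\in\Pe$. The future half $n_0,n_1,\dots$ of (\ref{codseq}) is by construction the $(a,b)$-expansion of $w_0$, and the Proposition preceding Proposition \ref{geod-reduced} already shows that the formal minus continued fraction $(n_{-1},n_{-2},\dots)$ of the past converges to $1/u_0$. Hence the entire content of the theorem is that this minus continued fraction \emph{is} the canonical $(a',b')$-expansion of $1/u_0$; equivalently, that $n_{-k}=\lfloor 1/u_{-(k-1)}\rceil_{a',b'}$ for every $k\ge 1$.

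The device I would use is the orientation-reversing reflection $\rho:=S\circ\psi$, which on pairs reads $\rho(u,w)=(1/w,1/u)$. Using the definition $\Lambda_{a',b'}=S(D_{a',b'}\cap\{a'\le w\le b'\})$ of an $(a',b')$-reduced geodesic together with the strip description (\ref{Lambda-strip}) and the symmetry of duality (Remark \ref{rem-dual}), a direct Möbius computation gives $\psi(\Lambda_{a,b})=D_{a',b'}\cap\{a'\le w\le b'\}$, and therefore $\rho(\Lambda_{a,b})=\Lambda_{a',b'}$. Consequently each reflected pair $\delta_k:=\rho(u_{-k},w_{-k})=(1/w_{-k},1/u_{-k})$ is an $(a',b')$-reduced geodesic whose attracting endpoint is $1/u_{-k}$. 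Moreover, the recursion (\ref{wk}) defining the past yields $1/u_{-(k-1)}=n_{-k}-u_{-k}$ and the analogous identity for $w$, so that (applying $ST^{-n_{-k}}$ coordinatewise)
\[
ST^{-n_{-k}}\delta_{k-1}=\delta_k\qquad(k\ge 1).
\]

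The heart of the proof is to identify $n_{-1}$ with the first $(a',b')$-digit $m_0:=\lfloor 1/u_0\rceil_{a',b'}$ of the attracting endpoint of $\delta_0$. By the reduction theory of Section 3 applied to $(a',b')$, the map $R_{a',b'}$ is a bijection of $\Lambda_{a',b'}$ with $R_{a',b'}(\delta_0)=ST^{-m_0}\delta_0$. On the other hand the displayed identity gives $ST^{-n_{-1}}\delta_0=\delta_1\in\Lambda_{a',b'}$, so $ST^{-n_{-1}}\delta_0$ is itself an $(a',b')$-reduced geodesic of the special one-step form $ST^{-m}\delta_0$. I would then argue that $ST^{-m}\delta_0$ is $(a',b')$-reduced for exactly one integer $m$, namely $m=m_0$: this is precisely the statement that $R_{a',b'}$ is the first-return map of $F_{a',b'}$ to $\Lambda_{a',b'}$, every other reduced $\sz$-translate being represented by a strictly longer word in $S,T^{\pm1}$ (cf. Theorem \ref{coding}(1) for $(a',b')$). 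Hence $n_{-1}=m_0$ and $\delta_1=R_{a',b'}(\delta_0)$. Replacing $\delta_0$ by $\delta_1$ and iterating, $(\delta_k)_{k\ge 0}$ is exactly the forward $R_{a',b'}$-reduction orbit of $\delta_0$, so by Theorem \ref{coding}(2) its digits are $m_k=n_{-(k+1)}$; that is, $1/u_0=\lfloor n_{-1},n_{-2},\dots\rceil_{a',b'}$. Combined with $w_0=\lfloor n_0,n_1,\dots\rceil_{a,b}$ this is the asserted juxtaposition, and shift-invariance is immediate, since the left shift carries $\g_0$ to the reduced geodesic $\g_1=R_{a,b}(\g_0)$, to which the identical argument applies with $(u_0,w_0)$ replaced by $(u_1,w_1)$.

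The step I expect to be the genuine obstacle is the uniqueness claim ``$ST^{-m}\delta_0\in\Lambda_{a',b'}$ only for $m=m_0$''. The easy necessary condition $1/u_0-m\in[a',b']$ does \emph{not} pin down $m$ when $b'-a'>1$ (which does occur, e.g.\ for the dual $(\tfrac{1-\sqrt5}{2},\tfrac{-1+\sqrt5}{2})$ of the Hurwitz expansion, where $b'-a'=\sqrt5-1>1$), so the argument cannot rely on the vertical strip alone: it must use the finite rectangular structure of $D_{a',b'}$ from Theorem \ref{main}, entering through the bijectivity and first-return property of $R_{a',b'}$. Once that is granted, the remaining ingredients---the identity $\rho(\Lambda_{a,b})=\Lambda_{a',b'}$ and the recursion $ST^{-n_{-k}}\delta_{k-1}=\delta_k$---are routine computations.
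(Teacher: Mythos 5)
Your skeleton --- the reflection $\rho=S\psi$, the identity $\rho(\Lambda_{a,b})=\Lambda_{a',b'}$ (via \eqref{Lambda-strip} and the symmetry of duality), and the recursion $ST^{-n_{-k}}\delta_{k-1}=\delta_k$ --- is correct and overlaps with the paper's proof. But there is a genuine gap exactly at the step you flag as the obstacle: the uniqueness claim that $ST^{-m}\delta_0\in\Lambda_{a',b'}$ forces $m=m_0$, and the tools you invoke do not prove it. Bijectivity of $R_{a',b'}$ only controls preimages \emph{under $R_{a',b'}$}; an arbitrary translate $ST^{-m}\delta_0$ need not lie on the $F_{a',b'}$-orbit of $\delta_0$ at all, so the first-return property says nothing about it directly. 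Theorem \ref{coding}(1) does give $ST^{-m}\delta_0=R^k_{a',b'}\delta_0$ for \emph{some} $k\in\Z$, but to conclude $k=1$ (hence $m=m_0$) you must exclude the possibility that two distinct elements of $PSL(2,\Z)$ act identically on the single pair of endpoints of $\delta_0$. Writing $W_k$ for the group element implementing $R^k_{a',b'}$ on $\delta_0$, the coincidence $ST^{-m}\delta_0=W_k\delta_0$ with $ST^{-m}\neq W_k$ occurs precisely when $W_k^{-1}ST^{-m}$ is a hyperbolic element whose axis is $\delta_0$, i.e.\ when $\delta_0$ projects to a \emph{closed} geodesic --- a case you cannot discard, since the theorem is asserted for every reduced geodesic and the Remark immediately following it is applied precisely to closed geodesics. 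Ruling this out requires a normal-form argument in $PSL(2,\Z)\cong\Z_2 * \Z_3$ (words $ST^{-m_{k-1}}\cdots ST^{-m_0}$ of different block lengths are distinct elements, which in turn needs admissibility of digit strings to prevent cancellations from $(ST^{\mp 1})^3=\Id$); none of this is in your proposal, and it is substantially harder than the remark ``every other reduced translate is a strictly longer word'' suggests.

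The paper closes this hole by using \eqref{eq-dual} \emph{dynamically}, not just set-theoretically, and this is the fix you should adopt: since $F_{a',b'}=\psi\circ F^{-1}_{a,b}\circ\psi^{-1}$, conjugation by $S\psi$ carries the first-return map of $F^{-1}_{a,b}$ to $\Lambda_{a,b}$ (namely $R^{-1}_{a,b}$) to the first-return map of $F_{a',b'}$ to $\Lambda_{a',b'}$ (namely $R_{a',b'}$), i.e.\ the diagram $S\psi\circ R^{-1}_{a,b}=R_{a',b'}\circ S\psi$ commutes. Evaluating at $(u_0,w_0)$ and using that $R_{a',b'}$ acts on $(1/w_0,1/u_0)$ by $ST^{-m_0}$, where $m_0$ is \emph{by definition} the first $(a',b')$-digit of $1/u_0$, gives
\[
R^{-1}_{a,b}(u_0,w_0)=(S\psi)^{-1}\,ST^{-m_0}\,S\psi(u_0,w_0)=\bigl(T^{m_0}Su_0,\,T^{m_0}Sw_0\bigr),
\]
so the past digit $n_{-1}$ coincides with $m_0$ with no uniqueness claim needed (if $ST^{-j}$ and $ST^{-m_0}$ agree on a pair of distinct points, then $T^{j-m_0}$ fixes two points, so $j=m_0$). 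In short: you use $\rho$ only to transport the cross-section sets, whereas transporting the return maps along $\rho$ is what makes the digit identification automatic; that conjugation step is precisely what your proposal is missing.
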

\begin{proof} We will show that the digits in the $(a',b')$-expansion of $1/u_0$ coincide with the digits of the ``past" of (\ref{codseq}). By \eqref{eq-dual}, the following diagram
\[
\begin{CD}
\Lambda_{a,b}  @>{\operatorname{S\psi}}>>  \Lambda_{a',b'}\\
@V{\operatorname{R^{-1}_{a,b}}}VV      @VV{\operatorname{R_{a',b'}}}V \\
\Lambda_{a,b}      @>{\operatorname{S\psi}}>> \Lambda_{a',b'}
\end{CD}
\]
is commutative. The pair $(u_0,w_0)\in \Lambda_{a,b}$, therefore $(Su_0,Sw_0)\in S\Lambda_{a,b}\subset D_{a,b}$, and $(1/w_0,1/u_0)\in\Lambda_{a',b'}$.
The first digit of the $(a',b')$-expansion of $1/u_0$ is $n_{-1}$, so
\[
R_{a',b'}(1/w_0,1/u_0)=(ST^{-n_{-1}}(1/w_0),ST^{-n_{-1}}(1/u_0))
\]
 maps $\Lambda_{a',b'}$ to itself. Then 
 \[
 (u_{-1},w_{-1}):=R^{-1}_{a,b}(u_0,w_0)=(T^{n_{-1}}Su_0,T^{n_{-1}}Sw_0)\in\Lambda_{a,b}
 \]
 and $(ST^{-n_{-1}}u_{-1},ST^{-n_{-1}}w_{-1})=(u_0,w_0)$. Also  
 $w_{-1}=\lfloor n_{-1},n_0,n_1,\dots\rceil_{a,b}$, and $ST^{-n_{-1}}(1/u_0)=1/u_{-1}=\lfloor n_{-2},\dots\rceil_{a',b'} $.
 
Continuing by induction, one proves  that all digits of the ``past" of the sequence (\ref{codseq}) are the digits of the $(a',b')$-expansion of $1/u_0$.

In order to see what happens under a left shift, we reverse the diagram to obtain:
 \[
\begin{CD}
\Lambda_{a,b}  @>{\operatorname{S\psi}}>>  \Lambda_{a',b'}\\
@V{\operatorname{R_{a,b}}}VV      @VV{\operatorname{R^{-1}_{a',b'}}}V \\
\Lambda_{a,b}      @>{\operatorname{S\psi}}>> \Lambda_{a',b'}
\end{CD}
\]
 Since the first digit of $(a,b)$-expansion of $w_0$ is $n_0$,
 \[
 R_{a,b}(u_0,w_0)=(ST^{-n_0}u_0,ST^{-n_0}w_0)
 \]
 maps $\Lambda_{a,b}$ to itself. Then $(u_1,w_1):=(ST^{-n_0}u_0,ST^{-n_0}w_0)$ and $w_1=\lfloor n_1,n_2,\dots\rceil_{a,b}$. Also 
 \[
 (1/w_1,1/u_1)=R^{-1}_{a',b'}(1/w_0,1/u_0)=(T^n_0S(1/w_0),T^n_0S(1/u_0)),
 \]
 hence $1/u_1=\lfloor n_0,n_{-1},n_{-2},\dots\rceil_{a',b'}$.
 \end{proof}
\begin{rem}
Under conditions of Theorem \ref{coding-dual}, if $\gamma_0$  projects to a closed geodesic on $M$, then its coding sequence is periodic, and $w_0=\lfloor \overline{n_0,n_1,\dots,n_m}\rceil_{a,b}$, $1/{u_0}=\lfloor \overline{n_m,\dots, n_1,n_0}\rceil_{a',b'}$.
\end{rem}

\begin{thm} \label{soficshift}
If an $(a,b)$-expansion admits a dual expansion, then the symbolic space $(X_{a,b},\sigma)$ is a sofic shift.
\end{thm}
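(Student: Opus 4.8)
The plan is to present $X_{a,b}$ as the set of label sequences of bi-infinite paths in a finite directed labeled graph; since such a label-shift is a one-block factor of the edge shift of the graph (a topological Markov chain on a countable alphabet of edges), this exhibits $X_{a,b}$ as a sofic shift in the sense of the definition in Section \ref{codinggeodesicflow}.

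First I would use Theorem \ref{coding-dual} to decouple the future and the past. A bi-infinite sequence $\lfloor\dots,n_{-1},n_0,n_1,\dots\rceil$ is admissible precisely when $(n_0,n_1,\dots)$ is the $(a,b)$-expansion of some $w_0$, $(n_{-1},n_{-2},\dots)$ is the $(a',b')$-expansion of some $1/u_0$, and the resulting pair $(u_0,w_0)$ lies in $\Lambda_{a,b}=D_{a,b}\cap\{-b'\le u\le -a'\}$. Thus admissibility is governed by the single geometric condition $(u_0,w_0)\in\Lambda_{a,b}$, required to persist under every power of the reduction map $R_{a,b}$, which implements the left shift.

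Second, I would exploit the finite rectangular structure of $D_{a,b}$ from Theorem \ref{main}(2). Subdividing $\Lambda_{a,b}$ along the finitely many horizontal $y$-levels and vertical $x$-levels of its bounding step functions yields a finite collection of closed rectangles $R_1,\dots,R_p$, which I take as the vertices of a graph $G$. For each symbol $n\in\N$ and each $R_i$ meeting the cylinder $\{\lfloor w\rceil_{a,b}=n\}$, I would draw an edge labeled $n$ from $R_i$ to each $R_j$ with $R_j\subseteq R_{a,b}(R_i\cap\{n_0=n\})$. The coding sequence of a reduced geodesic is then exactly the label sequence of the walk $k\mapsto$ (rectangle containing $(u_k,w_k)=R^k_{a,b}(u_0,w_0)$), so reading labels along bi-infinite paths in $G$ reproduces precisely the admissible sequences. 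Because the label-shift of $G$ is automatically closed and shift-invariant, it coincides with the closure $X_{a,b}$, and the label map is the desired factor map.

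The crux, and the main obstacle, is to verify that this finite partition is genuinely a Markov partition for $R_{a,b}$: for every $R_i$ and every admissible first digit $n$, the image $R_{a,b}(R_i\cap\{n_0=n\})$ must be a union of full rectangles $R_j$, with no rectangle cut through its interior. Here I would invoke the explicit description of the boundary of $D_{a,b}$ in terms of the finite orbit set $\LL_{a,b}$ from \cite[\S 5]{KU3}; since the branches of $R_{a,b}$ act by the M\"obius maps $ST^{-n}$ and the $y$-levels of the step functions are exactly the relevant values in $\LL_{a,b}$, these maps carry level boundaries to level boundaries. The hypothesis that the expansion has a dual is essential at this point: by Proposition \ref{dual-properties} it forces the vertical boundaries of the two components to be the clean rays $x=1-b'$ and $x=-1-a'$ and eliminates the intermediate $y$-levels with $a<y<0$ and $0<y<b$, so the one-sided future and past expansions each become finite-state and their coupling through $\Lambda_{a,b}$ reduces to a finite bipartite relation among the rectangles. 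The remaining care is purely bookkeeping on the finitely many boundary segments of $\Lambda_{a,b}$ where $R_{a,b}$ fails to be bijective; these give finitely many exceptional identifications that are absorbed harmlessly when passing to the closure.
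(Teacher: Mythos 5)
Your overall architecture (refine $\Lambda_{a,b}$ into rectangles using its finite rectangular structure, build a Markov chain over them with digit labels, exhibit $X_{a,b}$ as a factor of that chain, and bring in duality through Proposition \ref{dual-properties} to control the horizontal levels) is the same as the paper's, and your use of Theorem \ref{coding-dual} to decouple future and past is legitimate. However, the step you yourself identify as the crux is formulated with the wrong Markov condition, and as stated the construction breaks. The map $R_{a,b}$ is hyperbolic on $\Lambda_{a,b}$: on the cylinder $\{n_0=n\}$ it acts by $ST^{-n}$, which expands in the $w$-direction but \emph{contracts} in the $u$-direction, so the image $R_{a,b}\bigl(R_i\cap\{n_0=n\}\bigr)$ is a thin vertical strip whose $u$-extent has length of order $1/n^2$. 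Consequently it is never a union of full rectangles $R_j$ --- the condition ``no rectangle cut through its interior'' is the one-dimensional Markov condition and cannot hold for a two-dimensional hyperbolic map. Worse, your edge rule ``draw an edge labeled $n$ from $R_i$ to $R_j$ iff $R_j\subseteq R_{a,b}(R_i\cap\{n_0=n\})$'' produces \emph{no} edges at all once $n$ is large, since no rectangle of positive width fits inside such a strip; the label shift of your graph would therefore miss almost all admissible sequences and could not equal $X_{a,b}$.

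The condition that actually needs to be verified --- and the one the paper checks, invoking Adler's criterion \cite[Theorem 7.9]{Ad} --- is transversality of intersections: for any two partition elements, $R(M_n)\cap M_m$ is either empty or a single rectangle whose two horizontal sides lie on the horizontal boundary of $M_m$ and whose two vertical sides lie on the vertical boundary of $R(M_n)$; edges are then defined by nonempty transversal intersection, not by containment. This is precisely where duality enters: Proposition \ref{dual-properties}(ii) guarantees that the horizontal boundaries of the image strips sit at $y$-levels of the step functions bounding $D_{a,b}$ (no intermediate levels in $(a,0)$ or $(0,b)$ exist), and Remark \ref{rem-dual} gives $-1\le a\le 0\le b\le 1$, so that complete cylinder rectangles are unit squares whose images are full-height vertical strips; together these force every intersection to be transversal. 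Note also that the paper refines only \emph{horizontally}, subdividing the finitely many incomplete cylinder rectangles $\Lambda_n$ and extending the alphabet $\N$ to $\N'$, and the factor map simply forgets the subscripts --- a countable-alphabet Markov chain with a one-block factor, which is what ``sofic'' requires here. With the transversality condition and the corrected edge relation, the rest of your plan (realizing bi-infinite paths by nested rectangles, passing to the closure, the labeling factor map) does go through and is essentially the paper's argument; as written, though, the central verification fails.
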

\begin{proof} The ``natural" (topological) partition of the set $\Lambda_{a,b}$ related to the alphabet $\N$ is $\Lambda_{a,b}=\displaystyle\cup_{n\in\N}\Lambda_n$, where $\Lambda_n$ are labeled by the  symbols of the alphabet $\N$ and are defined
by the following condition: $\Lambda_{n}=\{(u,w)\in \Lambda_{a,b}\mid
n_{0}(u,w)=n_0(w)=n\}$. In order to prove that the space $(X_{a,b},\sigma)$ is sofic one needs to find a topological Markov chain $(M_{a.b},\tau)$ and a surjective continuous map $h:M_{a,b}\to X_{a,b}$ such that $h\circ\tau=\sigma\circ h$.

Notice that the elements $\Lambda_n$ are rectangles for large $n$; in fact, at most two elements in the upper part and at most two elements in the lower part of $\Lambda_{a,b}$ are incomplete rectangles (see  Figure \ref{partition}).

\begin{figure}[htb]
\begin{center}
\includegraphics{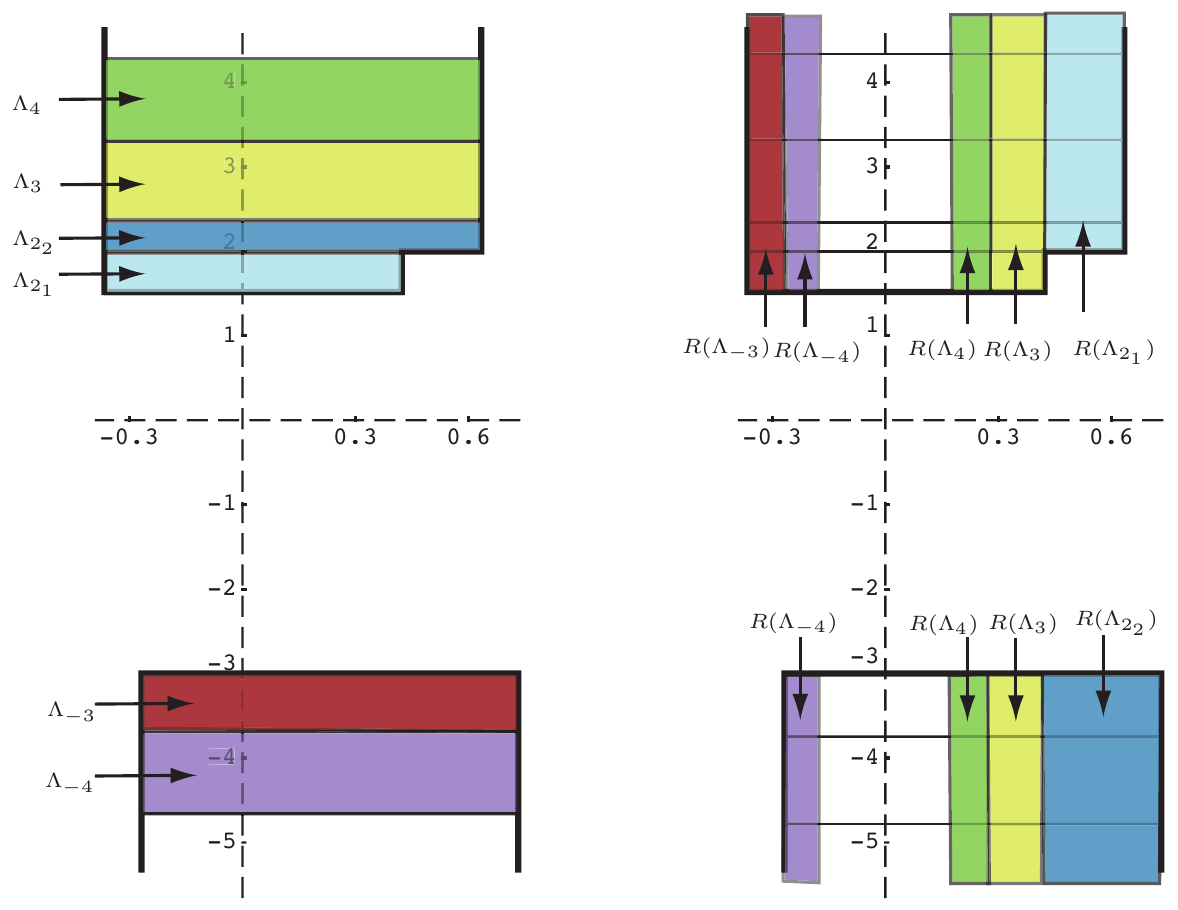}
\caption{The partition of $\Lambda_{a,b}$ and its image through $R_{a,b}$.}
\label{partition}
\end{center}
\end{figure}

Since $\Lambda_{a,b}$ has finite rectangular structure, we can sub-divide horizontally these incomplete rectangles into rectangles, and extend the alphabet $\N$ by adding subscripts to the corresponding elements of $\N$. For example, if $\Lambda_2$ is subdivided into two rectangles, $\Lambda_2=\displaystyle\cup_{i=1}^2\Lambda_{2_i}$, the ``digit" $2$ will give rise to two digits, $2_1,2_2$ in the extended alphabet $\N'$ (see Figure \ref{partition}). We denote the new partition of $\Lambda_{a,b}$ by $\displaystyle\cup_{n\in\N'}M_n$. Notice that it consists of rectangles with horizontal and vertical sides.
Since the first return $R$ to $\Lambda_{a,b}$
corresponds to the left shift of the coding sequence $x$
associated to the geodesic $(u,w)$, we see that $x=\{n_k\}_{-\infty}^{\infty}$, where $n_k$ is defined by $R^k(u,w)\in \Lambda_{n_k}$.
Now we define the symbolic space $M_{a,b}$ as follows: to each sequence $x\in X_{a,b}$ we associate a geodesic $(u,w)$ by (\ref{Cod}), and define a new coding sequence $y=\{m_k\}_{-\infty}^{\infty}$, where $m_k$ is defined by
$R^k(u,w)\in M_{m_k}$, and $\tau$ is the left shift.

We will prove that $(M_{a,b},\tau)$ is a topological Markov chain. For this, in accordance to \cite[Theorem 7.9]{Ad}, it is sufficient to prove that for any pair of distinct symbol $n,m\in\N'$,
$R(M_n)$ and $M_m$ either do not intersect, or intersect ``transversally" i.e. their intersection is a rectangle
with two horizontal sides belonging to the horizontal boundary of $M_m$ and two vertical sides belonging to the vertical boundary of $R(M_n)$. Let us recall that $-1\leq a\leq 0 \leq b\leq 1$ (see Remark \ref{rem-dual}).
Therefore, if $M_n=\Lambda_n$ is a complete rectangle, it is, in fact,  a $1\times 1$ square, and its image under $R$ is an infinite vertical rectangle intersecting all $M_m$ transversally. If $M_n$ is obtained by subdivision of some $\Lambda_k$ and belongs to the lower part of $\Lambda_{a,b}$, its horizontal boundaries are the levels of the step-function defining the lower component of $D_{a,b}$, and by Proposition \ref{dual-properties}, since the lower boundary of $D_{a,b}$ does not have $y$-levels with $a<y<0$,
its image is a vertical rectangle intersecting only the lower component of $D_{a,b}$ whose horizontal boundaries are the levels of the step-function defining the lower component of $D_{a,b}$. Therefore, all possible intersections with $M_m$ are transversal. A  similar argument applies to the case when $M_n$ belongs to the upper part of $\Lambda_{a,b}$. The map $h:M_{a,b}\to X_{a,b}$ is obviously continuous, surjective, and, in addition,  $h\circ\tau=\sigma\circ h$.
\end{proof}

\section{Invariant measures and ergodic properties}\label{s:9}

Based on the finite rectangular geometric structure of the domain $D_{a,b}$ and the connections with the geodesic flow on the modular surface, we study some of the measure-theoretic properties of the Gauss-type map $\hat f_{a,b}:[a,b)\rightarrow [a,b)$,
\begin{equation}\label{1dGauss}
\hat f_{a,b}(x)=-\frac{1}{x}-\left\lfloor -\frac{1}{x}\right\rceil_{a,b}\,,\quad  \hat f_{a,b}(0)=0\,.
\end{equation}
Notice that the associated natural extension map $\hat F_{a,b}$ 
\begin{equation}\label{2dGauss}
\hat F _{a,b}(x,y)=\left(\hat f_{a,b}(x),-\frac{1}{y-\lfloor -1/x\rceil_{a,b}}\right)
\end{equation}
is obtained from the map $F_{a,b}$ induced on the set $\Lambda_{a,b}$ by the change of coordinates 
\begin{equation}\label{cchange}
x=-1/w,\,\,y=u
\end{equation}
 (or, equivalently, on the set $D_{a,b} \cap \{(u,w)| a\le w <b\}$ by the change of coordinates $x=w, y=-1/u$). Therefore the domain $\hat \Lambda_{a,b}$ of $\hat F _{a,b}$
 is easily identified knowing $\Lambda_{a,b}$ and may be considered as its ``compactification". 

Many of the measure-theoretic properties of $\hat f_{a,b}$ and $\hat F_{a,b}$ (existence of an absolutely continuous invariant measure, ergodicity) follow from the fact that the geodesic flow $\varphi^t$ on the modular surface $M$ can be represented as a special
flow 
$(R_{a,b},\Lambda_{a,b},g_{a,b})$ on the space 
\[
\Lambda_{a,b}^{g_{a,b}}=\{(u,w,t) : (u,w)\in \Lambda_{a,b},\,\,0\leq t\leq g_{a,b}(u,w)\}
\]
(see Section 2). We recall that $R_{a,b}=F_{a,b}|_{\Lambda_{a,b}}$ and $g_{a,b}$ is the ceiling function (the time of the first return to the cross-section $C_{a,b}$) parametrized by $(u,w)\in \Lambda_{a,b}$.

We start with the fact that the geodesic flow $\{\varphi^t\}$ preserves the smooth (Liouville) measure $dm=\displaystyle\frac{dudwdt}{(w-u)^2}$ (see, e.g., \cite{AF1}), hence $R_{a,b}$ preserves the absolutely continuous measure $d\rho=\displaystyle\frac{dudw}{(w-u)^2}$. Using the change of coordinates \eqref{cchange}, the map $\hat F_{a,b}$ preserves the absolutely continuous measure $d\nu=\displaystyle\frac{dxdy}{(1+xy)^2}$.

The set $\Lambda_{a,b}$ has finite  measure $d\rho$
if $a\ne 0$ and $b\ne 0$, since it is uniformly bounded away from the line $\Delta=\{(u,w)\: :u=w\} \subset \R^2$ (see relations \eqref{upper} and \eqref{lower}). In this situation, we can normalize the measure $d\rho$ to obtain the smooth probability measure 
\begin{equation}\label{drho}
d\rho_{a,b}=\frac{d\rho}{K_{a,b}}=\frac{dudw}{K_{a,b}(w-u)^2}
\end{equation}
where $K_{a,b}=\rho(\Lambda_{a,b})$.
Similarly, if $a\ne 0$ and $b\ne 0$, the map $\hat F_{a,b}$ preserves the smooth probability measure 
\begin{equation}\label{nu}
d\nu_{a,b}=\frac{dxdy}{K_{a,b}(1+xy)^2}
\end{equation}
and $K_{a,b}=\rho(\Lambda_{a,b})=\nu(\hat\Lambda_{a,b})$. 

Returning to the Gauss-type map, $\hat f_{a,b}$, one can obtain explicitly a Lebesgue equivalent invariant probability measure $\mu_{a,b}$ by projecting the measure $\nu_{a,b}$ onto the $x$-coordinate (push-forward); this is equivalent to integrating $\nu_{a,b}$ over $\hat \Lambda_{a,b}$ with respect to the $y$-coordinate as explained in \cite{AF3}. 

We can immediately conclude that the systems $(\hat F_{a,b},\nu_{a,b})$ and $(\hat f_{a,b},\mu_{a,b})$ are ergodic from the fact that the geodesic flow $\{\varphi^t\}$ is ergodic with respect to $dm$. By using some well-known results about one dimensional maps that are piecewise monotone and expanding, and the implications for their natural extension maps, we can establish stronger measure-theoretic properties: $(\hat f_{a,b},\mu_{a,b})$ is exact, and $(\hat F_{a,b},\nu_{a,b})$ is a Bernoulli shift. Here we follow the presentation from \cite{Zw} based on \cite{W, R}.

\begin{thm}\label{Bern}
For any $a\ne 0$ and $b\ne 0$, the system $(\hat f_{a,b},\mu_{a,b})$ is exact and its natural extension $(\hat F_{a,b},\nu_{a,b})$ is a Bernoulli shift.
\end{thm}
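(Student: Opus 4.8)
The plan is to verify that $\hat f_{a,b}$ belongs to the class of piecewise smooth, Adler-type interval maps for which exactness and Bernoullicity of the natural extension are established, following the presentation in \cite{Zw}. First I would record the structural facts. The map $\hat f_{a,b}$ is piecewise monotone with countably many branches indexed by $\N$; on the branch where $\lfloor -1/x\rceil_{a,b}=n$ it coincides with the M\"obius map $g_n(x)=-1/x-n$, which is real-analytic and strictly increasing, with $g_n'(x)=x^{-2}$ and $g_n''(x)=-2x^{-3}$. Hence Adler's distortion condition holds: $|\hat f_{a,b}''|/(\hat f_{a,b}')^2=2|x|\le 2\max(|a|,b)<\infty$ on the finite interval $[a,b)$. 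I would also invoke from the preceding discussion that, for $a\ne 0$ and $b\ne 0$, the measure $\mu_{a,b}$ is a finite absolutely continuous invariant probability measure equivalent to Lebesgue, and that $(\hat f_{a,b},\mu_{a,b})$ is ergodic, both inherited from the geodesic flow.

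Next I would establish exactness. With Adler's condition in force, the pushforwards of Lebesgue measure along the inverse branches have uniformly comparable densities (the R\'enyi--Adler bounded-distortion estimate), so that for any Borel set $A$ with $\mu_{a,b}(A)>0$ the densities of the forward images $\hat f_{a,b}^{\,n}(A)$ relative to $\mu_{a,b}$ converge uniformly to a constant. This is exactly the input used in \cite{W,R} to conclude that the tail $\sigma$-algebra $\bigcap_{n\ge 0}\hat f_{a,b}^{-n}\mathcal B$ is trivial; since $(\hat f_{a,b},\mu_{a,b})$ is ergodic there is a single such component, and the system is exact. The possibility that $|\hat f_{a,b}'|$ is not bounded below away from $1$ (a neutral fixed point may occur at $x=\pm1$) is harmless here: the results of \cite{Zw} apply to this wider Adler class provided the invariant measure is finite, which is guaranteed precisely by $a\ne 0$ and $b\ne 0$.

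Finally I would upgrade to the Bernoulli property of the natural extension. By the standard principle that the natural extension of an exact endomorphism is a $K$-automorphism, $(\hat F_{a,b},\nu_{a,b})$ is $K$. To promote $K$ to Bernoulli I would exploit the finite rectangular structure of $\hat\Lambda_{a,b}$ furnished by Theorem~\ref{main}: this structure supplies a genuine local product (Markov) coordinate system in which the bounded-distortion estimate yields Ornstein's very-weak-Bernoulli condition, exactly as carried out in \cite{Zw} for maps of this type. The main obstacle is this last step, since $K$ does not imply Bernoulli in general; what makes it work is that the attractor $D_{a,b}$, and hence $\hat\Lambda_{a,b}$, is bounded by finitely many horizontal and vertical steps, so the natural extension inherits the rectangular structure needed to verify Ornstein's criteria. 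Once very-weak-Bernoulli is checked, $(\hat F_{a,b},\nu_{a,b})$ is a Bernoulli shift, completing the proof.
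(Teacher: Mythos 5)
Your argument only goes through on the part of the parameter set where $\hat f_{a,b}$ is genuinely uniformly expanding, namely $-1<a<0<b<1$; there your route (Adler's condition, the finite image property --- which you never actually verify, though it does hold, the branch images being just four intervals --- and uniform expansion, then \cite{Zw} reducing to Rychlik \cite{R}) is exactly the paper's first case. The gap is everything else. The theorem assumes only $a\ne 0$, $b\ne 0$, so parameters with $b\ge 1$ or $a\le -1$ are allowed (for instance $b\ge 1$ forces $-1\le a<0$ via $-ab\le 1$, but no more than that), and for such parameters $|\hat f_{a,b}'(x)|=1/x^2\le 1$ on the \emph{entire subinterval} $[1,b)$. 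Your way of dismissing this --- ``a neutral fixed point may occur at $x=\pm 1$'' and the results of \cite{Zw} ``apply to this wider Adler class provided the invariant measure is finite'' --- is wrong on two counts. First, the points of $[1,b)$ are not fixed: $\hat f_{a,b}$ maps $(0,b)$ into $[a,a+1]$, so the loss of expansion is not of indifferent-fixed-point type, and $\hat f_{a,b}$ is not an AFN map in Zweim\"uller's sense, whose non-uniform-expansion condition permits expansion to fail only on neighborhoods of finitely many indifferent \emph{fixed} points. Second, even for genuine AFN maps, what \cite{Zw} provides is exactness; the Bernoulli property of the natural extension is obtained there only for maps satisfying (A), (F), (U), by verifying Rychlik's conditions, and those require $\sup 1/|\hat f_{a,b}'|<1$, which fails here. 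Your fallback ending --- $K$-property plus ``finite rectangular structure'' yields very weak Bernoulli --- is not a proof: no VWB estimate is set up, and nothing of that kind is carried out in \cite{Zw}, so the step from $K$ to Bernoulli is left entirely unsupported precisely where it is needed.

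What is missing is the step the paper supplies for $b\ge 1$ (the case $a\le -1$ being symmetric): an orbit analysis showing expansion is restored after a uniformly bounded number of iterations. Writing $(\hat f^n)'(x)=1/\bigl(x\hat f(x)\cdots\hat f^{n-1}(x)\bigr)^2$ and choosing $K$ with $b(a+1)^K<1$, one checks --- treating $ab>-1$ and $ab=-1$ separately --- that for every $x$ some $n\le K+1$ has $|x\hat f(x)\cdots \hat f^{n-1}(x)|$ bounded away from $1$ by a uniform constant, so the iterate $\hat f^N$ with $N=(K+1)!$ satisfies (A), (F), (U). Then Zweim\"uller/Rychlik make $\hat F^N_{a,b}$ Bernoulli and $\hat f^N_{a,b}$ exact (whence $\hat f_{a,b}$ is exact, since its tail $\sigma$-algebra is contained in that of $\hat f^N_{a,b}$), and Ornstein's theorem \cite{O} that roots of Bernoulli automorphisms are Bernoulli gives that $\hat F_{a,b}$ itself is Bernoulli. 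Without this argument, or some substitute for it, your proof does not cover all $a\ne 0$, $b\ne 0$.
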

\begin{proof}
Let us consider first the case $-1<a<0<b<1$. The interval $(a,b)$ admits a countable partition  $\xi=\{X_i\}_{i\in \Z\setminus\{0\}}$ of open intervals and the map $\hat f_{a,b}$ satisfies conditions (A), (F), (U) listed in \cite{Zw}. Condition (A) is Adler's distortion estimate: 

\smallskip

\( 
\qquad\qquad (A):  \quad \hat f''_{a,b}/(\hat f_{a,b}')^2 \text{ is bounded on } \displaystyle X=\cup_{i\in \Z\setminus\{0\}}  X_i,
\)

\medskip

\noindent condition (F) requires the finite image property of the partition $\xi$, 

\smallskip
\(
\qquad\qquad (F): \quad \hat f_{a,b}(\xi)=\{\hat f_{a,b}(X_i)\}_{i\in \Z\setminus\{0\}}\text{ is finite},
\)

\medskip

\noindent while condition (U) is a uniformly expanding condition

\smallskip

\( 
\qquad\qquad (U): \quad |\hat f'_{a,b}|\ge \tau >1 \text{ on } X.
\)

\medskip

\noindent Let $m\ge 0$ and $n\ge 0$ be such that $ a-m\le -1/b<a-m-1$ and $b+n\le -1/a<b+n+1$. Consider the open intervals

\[
X_1=\bigg(-\frac{1}{a-m-1}\,,\,b\bigg),\; X_i=\bigg(-\frac{1}{a-m-i}\,,\,-\frac{1}{a-m-i+1}\bigg) \text{ for } i\ge 2
\]
and 
\[
X_{-1}=\bigg(a\,,-\frac{1}{b+n+1}\bigg),\; X_{-i}=\bigg(-\frac{1}{b+n+i-1}\,,\,-\frac{1}{b+n+i}\bigg) \text{ for } i\ge 2.
\]

The map $\hat f_{a,b}$ satisfies conditions (A), (F), (U) with respect to the partition $\xi=\{X_i\}_{i\in \Z\setminus\{0\}}$. Indeed, 
$|\hat f''_{a,b}/(\hat f'_{a,b})^2|\le 2$ on $X$,  the collection of images $\hat f_{a,b}(\xi)$ consists of four sets $\hat f_{a,b}(X_1)$, $\hat f_{a,b}(X_{-1})$, $(b-1,b)$, $(a,a+1)$, and $|\hat f'_{a,b}|\ge \min\{\frac{1}{a^2},\frac{1}{b^2}\}>1$ on $X$. 
Zweim\"uller \cite{Zw} showed that any one-dimensional map for which conditions (A), (F), (U)  hold is exact and satisfies Rychlik's conditions described in \cite{R}, hence its natural extension map is Bernoulli.

We analyze now the case $b\ge 1$. Let $K>0$ be the smallest integer such that $b(a+1)^K<1$. We  will show
that there exists $\gamma>1$ such that, for every $x\in \bigcap_{i=0}^K \hat f_{a,b}^{-i}(X)$, some iterate $\hat f^n_{a,b}(x)$ with $n\le K+1$ is expanding, i.e. $|(\hat f_{a,b}^n)'(x)|\ge \gamma$. (For the rest of the proof, we simplify the notations and let $\hat f$ denote the map $\hat f_{a,b}$.)
Notice that if $x\in\bigcap_{i=0}^{n-1} \hat f^{-i}(X)$, then $\hat f^n$ is differentiable at $x$ and 
$$\frac{d}{dx}\hat f^n(x)
=\frac{1}{(x\hat f(x)\cdots\hat f^{n-1}(x))^2}\,.$$

\noindent Assume that $ab>-1$. We look at the following cases:
\begin{itemize}
\item[(i)] If $a<x<0$, then $b-1\le \hat f(x)\le b$, and  $|x\hat f(x)|\le |ab|<1$.
\item[(ii)] If $0<x<b$, then $a\le \hat f(x)\le a+1$. Let $K$ be such that $b(a+1)^K<1$. Then either there exists $1\le n\le K$ such that $0<\hat f^i(x)<a+1$ for $i=1,2,\dots,n-1$ and $a<\hat f^n(x)<0$, or $0<\hat f^i(x)<a+1$ for $i=1,2,\dots,K$. In the former case we have that
\begin{equation}\label{eq-rel1}
|x\hat f(x)\cdots\hat f^n(x)|\le |ab(a+1)^{n-1}|<1\,,
\end{equation}
while in the latter case
\begin{equation}\label{eq-rel2}
|x\hat f(x)\cdots\hat f^K(x)|\le |b(a+1)^K|<1\,.
\end{equation}
\end{itemize}

\noindent In the case
$ab=-1$, let $\tau,\epsilon>0$ be sufficiently small such that $$b<-1/(a+\tau)<b+1 \text{ and } a-1<-1/(b-\epsilon)<a\,.$$ 
We have:
\begin{itemize}
\item[(i)] If $a<x<a+\tau$, then $b-1<\hat f(x)<-1/(a+\tau)$, and 
$|x\hat f(x)|\le |a/(a+\tau)|<1$. If $a+\tau\le x<0$, then $|x\hat f(x)|\le |b(a+\tau)|<1$.
\item[(ii)] If $b-\epsilon< x<b$, then $0<\hat f(x)<a+1$ and one has either \eqref{eq-rel1} with $n\ge 2$ or \eqref{eq-rel2}. If $0<x\le b-\epsilon$, then one has \eqref{eq-rel1} or \eqref{eq-rel2} where $b$ is replaced by $b-\epsilon$.
\end{itemize}

In conclusion, there exists a constant $\gamma>1$ such that for every $x\in \bigcap_{i=0}^K \hat f_{a,b}^{-i}(X)$ some iterate $\hat f^n_{a,b}(x)$ with $n\le K+1$ satisfies the condition $|(\hat f_{a,b}^n)'(x)|\ge \gamma$. This implies that the iterate $\hat f^N_{a,b}$, with $N=(K+1)!$, is uniformly expanding, i.e. it satisfies property (U). Since properties (A) and (F) are automatically satisfied by any iterate of $\hat f_{a,b}$ (see \cite{Zw}), we have that $\hat F^N_{a,b}$ is Bernoulli. Using one of Ornstein's results \cite[Theorem 4, p. 39]{O}, it follows that $\hat F_{a,b}$  is Bernoulli.
\end{proof}

The next result gives a formula of the measure theoretic entropy of $(\hat F_{a,b},\nu_{a,b})$.

\begin{thm}
The measure-theoretic entropy of  $(\hat F_{a,b},\nu_{a,b})$ is given by
\begin{equation}\label{F-entropy}
h_{\nu_{a,b}}(\hat F_{a,b})=\frac{1}{K_{a,b}}\frac{\pi^2}{3}
\end{equation}
\end{thm}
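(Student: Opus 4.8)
The plan is to compute the entropy via the Abramov formula relating the entropy of the special flow to the entropy of its base map. Since the geodesic flow $\{\varphi^t\}$ on the modular surface is represented as a special flow $(R_{a,b},\Lambda_{a,b},g_{a,b})$ over the base $(R_{a,b},\Lambda_{a,b},\rho_{a,b})$ with ceiling function $g_{a,b}$, and since by the change of coordinates \eqref{cchange} the system $(R_{a,b},\rho_{a,b})$ is measure-theoretically isomorphic to $(\hat F_{a,b},\nu_{a,b})$, it suffices to apply Abramov's theorem. Abramov's formula states
\[
h_{\mathrm{Liouville}}(\varphi^1)=\frac{h_{\nu_{a,b}}(\hat F_{a,b})}{\int g_{a,b}\,d\rho_{a,b}}\,,
\]
so the entropy we want is $h_{\nu_{a,b}}(\hat F_{a,b})=h(\varphi^1)\cdot\int g_{a,b}\,d\rho_{a,b}$.

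First I would invoke the well-known value of the entropy of the geodesic flow on the modular surface with respect to the normalized Liouville measure. For a hyperbolic surface the topological and measure-theoretic entropy of the time-one map of the geodesic flow equals the topological entropy, which for constant curvature $-1$ is $1$; hence $h(\varphi^1)=1$ with respect to the \emph{normalized} Liouville measure. The subtlety is the normalization: the measure $dm=dudw\,dt/(w-u)^2$ restricted to $\Lambda_{a,b}^{g_{a,b}}$ has total mass equal to $\int_{\Lambda_{a,b}}g_{a,b}\,d\rho$, and the total Liouville mass of $SM$ is the standard constant, namely $\pi^2/3$ (this is $\zeta(2)\cdot 2 = \pi^2/3$, the volume of the unit tangent bundle of the modular surface, or equivalently $2\times$ the hyperbolic area $\pi/3$ of $\F$). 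Thus the natural normalization built into the special flow representation fixes $\int_{\Lambda_{a,b}}g_{a,b}\,d\rho=\pi^2/3$, independent of $(a,b)$, because every choice of $(a,b)$-cross-section represents the \emph{same} geodesic flow on the same modular surface.

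The key computation is therefore to show $\int_{\Lambda_{a,b}}g_{a,b}\,d\rho=\pi^2/3$. I would establish this by noting that the total Liouville volume $\int_{\Lambda_{a,b}^{g_{a,b}}}dm$ equals the Liouville volume of $SM$, and that the latter is the standard $\pi^2/3$; this is an instance of Kac's formula (the integral of the first-return time over the cross-section, against the flow-invariant measure, recovers the total space measure). Combining this with $h(\varphi^1)=1$ gives $h_{m}(\hat F_{a,b})=\pi^2/3$ with respect to the \emph{unnormalized} measure $\rho$; passing to the normalized probability measure $\nu_{a,b}=\rho/K_{a,b}$ rescales by the factor $1/K_{a,b}$, since scaling a measure by a constant scales the measure-theoretic entropy of a map by the same constant. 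This yields precisely
\[
h_{\nu_{a,b}}(\hat F_{a,b})=\frac{1}{K_{a,b}}\frac{\pi^2}{3}\,,
\]
as claimed.

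The main obstacle will be the careful bookkeeping of normalizations and the precise identification $\int g_{a,b}\,d\rho=\pi^2/3$: one must verify that the special flow built over $(\Lambda_{a,b},R_{a,b},g_{a,b})$ is genuinely measure-isomorphic to the full geodesic flow on $SM$ equipped with its total Liouville measure (not merely on an invariant subset), so that Kac's formula applies with the correct total mass. This requires that the cross-section $C_{a,b}$ be visited by almost every geodesic (established in Section 3) and that the ceiling function $g_{a,b}$ be genuinely integrable, which follows from the explicit cohomological formula $g_{a,b}(x)=2\log|w(x)|+\log h(x)-\log h(\sigma x)$ given earlier (at least in the range $-1\le a\le 0\le b\le 1$, where $\int g_{a,b}\,d\rho=\int 2\log|w|\,d\rho$ since the coboundary integrates to zero). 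Once integrability and the cross-section property are in hand, Abramov's theorem and the standard value $h(\varphi^1)=1$ do the rest.
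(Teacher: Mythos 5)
Your proposal is correct and follows essentially the same route as the paper: Abramov's formula for the special flow, the known facts $h_{\tilde m}(\{\varphi^t\})=1$ and $m(SM)=\pi^2/3$, and the identity $\int_{\Lambda_{a,b}}g_{a,b}\,d\rho=m(SM)$, which you justify by Kac's formula and the paper justifies equivalently via the Ambrose--Kakutani representation of the Liouville measure. Your detour through entropy with respect to the unnormalized measure $\rho$ (using the linear-scaling convention) is a presentational variant of the paper's bookkeeping with normalized measures, but it leads to the same computation and the same conclusion.
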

\begin{proof}
To compute the entropy of this two-dimensional map, we use Abramov's formula \cite{Ab}: 
$$h_{\tilde m}(\{\phi^t\})=\frac{h_{\rho_{a,b}}(R_{a,b})}{\int_{\Lambda_{a,b}}g_{a,b}d\rho_{a,b}}\,,$$
where $\tilde m$ is the normalized Liouville measure $d\tilde m=\frac{dm}{m(SM)}$. It is well-known that $m(SM)=\pi^2/3$ (see \cite{AF1}) and $h_{\tilde m}(\{\phi^t\})=1$ (see, e.g., \cite{Su1}).
The  measure $d\tilde m$ can be represented by the Ambrose-Kakutani theorem \cite{AmKa} as a smooth probability measure on the space $\Lambda_{a,b}^{g_{a,b}}$ 
\begin{equation}
d\tilde m=\frac{d\rho_{a,b}dt}{\int_{\Lambda_{a,b}}g_{a,b}d\rho_{a,b}}
\end{equation}
where $d\rho_{a,b}$ is the probability measure on the cross-section $\Lambda_{a,b}$ given by \eqref {drho}. This implies that
\[
d\tilde m=\frac{d\rho dt}{K_{a,b}\int_{\Lambda_{a,b}}g_{a,b}d\rho_{a,b}}=\frac{dm}{K_{a,b}\int_{\Lambda_{a,b}}g_{a,b}d\rho_{a,b}}\,.
\]
Therefore $K_{a,b}\int_{\Lambda_{a,b}}g_{a,b}d\rho_{a,b}=m(SM)={\pi^2}/{3}$ and
\[
h_{\nu_{a,b}}(\hat F_{a,b})=h_{\rho_{a,b}}(R_{a,b})=\int_{\Lambda_{a,b}}g_{a,b}d\rho_{a,b}=\frac{1}{K_{a,b}}\frac{\pi^2}{3}\,.
\]
\end{proof}

Since $(\hat F_{a,b},\nu_{a,b})$ is the natural extension of $(\hat f_{a,b},\mu_{a,b})$, the measure-theoretic entropies of the two systems coincide, hence
\begin{equation}\label{fab-entropy}
h_{\mu_{a,b}}(\hat f_{a,b})=\frac{1}{K_{a,b}}\frac{\pi^2}{3}\,.
\end{equation}

As an immediate consequence of the above entropy formula we derive a growth rate relation for the denominators of the partial quotients $p_n/q_n$ of $(a,b)$-continued fraction expansions, similar to the classical cases.

\begin{prop}
Let $\{q_n(x)\}$ be the sequence of the denominators of the partial quotients $p_n/q_n$ associated to the $(a,b)$-continued fraction expansion of $x\in [a,b)$. Then
\begin{equation}
\lim_{n\ra \infty}\frac{\log q_n(x)}{n}=\frac{1}{2}h_{\mu_{a,b}}(\hat f_{a,b})=\frac{1}{K_{a,b}}\frac{\pi^2}{6}\, \text{ for a.e. } x.
\end{equation}
\end{prop}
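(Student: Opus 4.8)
The plan is to follow the classical strategy of P.~L\'evy: relate the growth of the denominators $q_n(x)$ to the Lyapunov exponent of $\hat f_{a,b}$, and then identify that exponent with the entropy via Rokhlin's formula. Write $x=\lfloor 0,n_1,n_2,\dots\rceil_{a,b}$ (so that $n_0=\lfloor x\rceil_{a,b}=0$ for $x\in[a,b)$), and let $x_{k+1}=-1/\hat f_{a,b}^{\,k}(x)$ be the complete quotients, so that $\hat f_{a,b}$ acts as the shift on the digit string. Since each branch of $\hat f_{a,b}$ has the form $y\mapsto -1/y-c$, the chain rule gives
\[
\bigl|(\hat f_{a,b}^{\,n})'(x)\bigr|=\prod_{k=0}^{n-1}\bigl|\hat f_{a,b}'(\hat f_{a,b}^{\,k} x)\bigr|=\prod_{k=0}^{n-1}\frac{1}{(\hat f_{a,b}^{\,k} x)^2},
\]
which is exactly the product appearing in the proof of Theorem \ref{Bern}.

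First I would establish the arithmetic--geometric comparison $|(\hat f_{a,b}^{\,n})'(x)|\asymp q_n(x)^2$, with constants independent of $n$ and $x$. Using the standard recursion for the convergents one has $x-p_n/q_n=\pm 1/\bigl(q_n(q_n x_{n+1}-q_{n-1})\bigr)$, whence the $n$-cylinder $\Delta_n(x)$ (the set of points sharing the first $n$ digits with $x$) has length comparable to $q_n^{-2}$, since $x_{n+1}$ ranges over a bounded set and, for a.e.\ $x$, the denominator $q_n x_{n+1}-q_{n-1}$ is comparable to $q_n$. On the other hand, Adler's distortion estimate (condition (A) from the proof of Theorem \ref{Bern}) yields bounded distortion for the inverse branches, so $|\Delta_n(x)|\asymp 1/|(\hat f_{a,b}^{\,n})'(x)|$. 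Comparing the two gives the desired relation, hence
\[
\frac1n\log q_n(x)=\frac1{2n}\log\bigl|(\hat f_{a,b}^{\,n})'(x)\bigr|+O\!\left(\tfrac1n\right).
\]

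Next I would apply the Birkhoff ergodic theorem. Since $(\hat f_{a,b},\mu_{a,b})$ is ergodic (indeed exact, by Theorem \ref{Bern}) and $\log|\hat f_{a,b}'(y)|=-2\log|y|$ is $\mu_{a,b}$-integrable (the invariant density, obtained by integrating \eqref{nu} over the fibre, is bounded near $y=0$, so the logarithmic singularity is integrable), we get for a.e.\ $x$
\[
\frac1n\log\bigl|(\hat f_{a,b}^{\,n})'(x)\bigr|=\frac1n\sum_{k=0}^{n-1}\log\bigl|\hat f_{a,b}'(\hat f_{a,b}^{\,k} x)\bigr|\To \lambda:=\int \log|\hat f_{a,b}'|\,d\mu_{a,b}.
\]
Combined with the previous step, this yields $\frac1n\log q_n(x)\to \tfrac12\lambda$ for a.e.\ $x$.

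Finally, I would invoke Rokhlin's entropy formula, $\lambda=\int\log|\hat f_{a,b}'|\,d\mu_{a,b}=h_{\mu_{a,b}}(\hat f_{a,b})$, which is valid here because $\hat f_{a,b}$ (or an iterate) is a Rychlik map---precisely the framework already used in the proof of Theorem \ref{Bern}. Together with the entropy value \eqref{fab-entropy}, this gives $\frac1n\log q_n(x)\to \tfrac12 h_{\mu_{a,b}}(\hat f_{a,b})=\frac{1}{K_{a,b}}\frac{\pi^2}{6}$. I expect the main obstacle to be the uniform comparison $|(\hat f_{a,b}^{\,n})'(x)|\asymp q_n(x)^2$, together with the rigorous justification of Rokhlin's formula in this countable-branch, non-compact setting; both, however, are controlled by the distortion and expansion estimates (A), (F), (U) already verified for $\hat f_{a,b}$.
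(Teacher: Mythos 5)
Your proposal is correct and follows essentially the same route as the paper, which likewise deduces the result from Birkhoff's ergodic theorem (giving $\lim_n \frac{1}{n}\log q_n(x)=-\int_a^b\log|x|\,d\mu_{a,b}$ for a.e.\ $x$) combined with Rokhlin's formula $h_{\mu_{a,b}}(\hat f_{a,b})=\int_a^b\log|\hat f'_{a,b}|\,d\mu_{a,b}=-2\int_a^b\log|x|\,d\mu_{a,b}$ and the entropy value \eqref{fab-entropy}. The only difference is one of detail: the paper compresses the arithmetic comparison between $\log q_n(x)$ and the Birkhoff sum into the phrase ``similar to the classical case,'' whereas you make it explicit via the bounded-distortion estimate $|(\hat f_{a,b}^{\,n})'(x)|\asymp q_n(x)^2$, which is a reasonable way to supply that omitted step.
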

\begin{proof}
The proof is similar to the classical case: using the Birkhoff's ergodic theorem one has
$$
\lim_{n\ra \infty}\frac{\log q_n(x)}{n}=-\int_a^b\log|x|d\mu_{a,b}\,.
$$
At the same time, Rokhlin's formula tells us that 
$$h_{\mu_{a,b}}(\hat f_{a,b})=\int_a^b\log|\hat f'_{a,b}|d\mu_{a,b}=-2\int_a^b\log|x|d\mu_{a,b}\,,$$
hence the conclusion.
\end{proof}

\section{Some explicit formulas for the invariant measure $\mu_{a,b}$}
In order to obtain explicit formulas for $\mu_{a,b}$ and $h_{\mu_{a,b}}(\hat f_{a,b})$, one obviously needs an explicit description of the domain $D_{a,b}$. 
In \cite{KU3} we describe an algorithmic approach for finding the boundaries of $D_{a,b}$ for all parameter pairs $(a,b)$ outside of a negligible exceptional parameter set $\mathcal E$. Let us point out that the set $D_{a,b}$ may have an arbitrary large number of horizontal boundary segments. The qualitative structure of $D_{a,b}$ is given by the cycle properties of $a$ and $b$. This structure remains unchanged for all pairs $(a,b)$ having cycles with similar combinatorial complexity. For a large part of the parameter set the cycle descriptions are relatively simple (see \cite[Section 4]{KU3}) and we discuss it herein. 

In what follows, we focus our attention on the situation $-1\le a\le 0\le b\le 1$, and due to the symmetry of the parameter set with respect to the parameter line $a=-b$ we assume that $a\le -b$.  

We treat the case $1\le -\frac{1}{a}\le b+1$ and $a\le -\frac{1}{b}+m\le a+1$ (for some $m\ge 1$).
The coordinates of the corners of the boundary segments in the upper region $D_{a,b} \cap \{(u,w)| u<0, a\le w \le b\}$ are given by
$$
(-2,b-1), \Bigl(-\frac{3}{2},T^{-2}S(b-1)\Bigr), \dots,\Bigl(-\frac{m+1}{m},{(T^{-2}S)^{(m-1)}(b-1)}\Bigr),\Bigl(-1,-\frac{1}{a}-1\Bigr)
$$
while the corners of the boundary segments in the lower region $D_{a,b} \cap \{(u,w)| u>0, a\le w \le b\}$ are given by 
$$\Bigl(m,-\frac{1}{b}+m\Bigr), (m+1, a+1)\,.$$ Therefore the set $\hat \Lambda_{a,b}$ is given by
\begin{equation}
\begin{split}
\hat \Lambda_{a,b}= &\bigcup_{p=1}^{m-1}
[(T^{-2}S)^{p-1}(b-1),(T^{-2}S)^{p}(b-1)]\times[0,\frac{p}{p+1}]\\& \cup [(T^{-2}S)^{m-1}(b-1),-\frac{1}{a}-1]\times [0,\frac{m}{m+1}]\cup [-\frac{1}{a}-1,b]\times [0,1]\\
& \cup [a,-\frac{1}{b}+m]\times[-\frac{1}{m},0]\cup [-\frac{1}{b}+m,a+1]\times[-\frac{1}{m+1},0]
\end{split}
\end{equation}

\begin{figure}[htb]
\includegraphics{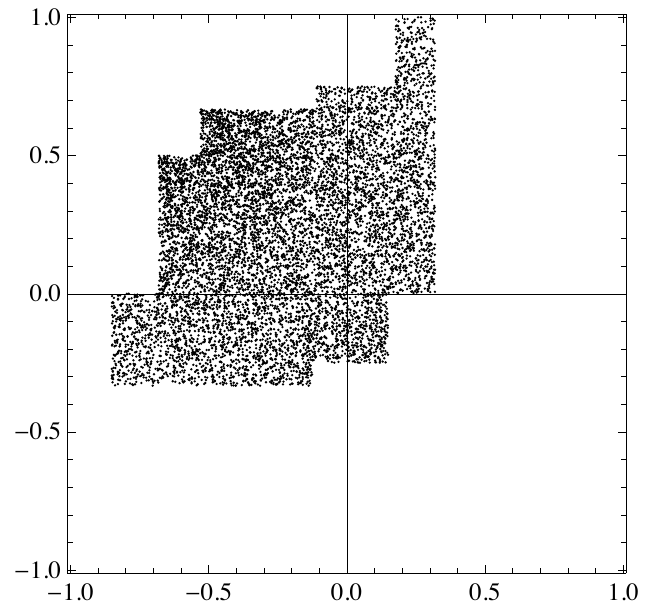}
\caption{Typical domain $\hat \Lambda_{a,b}$ for the case studied}
\end{figure}

\begin{thm} If $1\le -\frac{1}{a}\le b+1$ and $a\le -\frac{1}{b}+m\le a+1$, then
$$\mu_{a,b}=\frac{1}{K_{a,b}}h_{a,b}(x)dx\,,$$
where $K_{a,b}=\log[(m-a)(1+b)^{2-m}]$ and $h_{a,b}(x)=h^{+}_{a,b}(x)+h^{-}_{a,b}(x)$ with 
\begin{equation*}
h^+_{a,b}(x)=
\begin{cases}
\ds\frac{1}{x+\frac{p+1}{p}} &\text{ if } (T^{-2}S)^{p-1}(b-1)\le x < (T^{-2}S)^{p}(b-1), p=1,\dots, m-1\\[0.2in]
\ds\frac{1}{x+\frac{m+1}{m}} &\text{ if } (T^{-2}S)^{m-1}(b-1)\le x<-\frac{1}{a}-1\\[0.2in]
\ds\frac{1}{x+1} &\text{ if }   -\frac{1}{a}-1\le x<b
\end{cases}
\end{equation*}
and
\begin{equation*}
h^-_{a,b}(x)=\begin{cases}
\ds\frac{1}{m-x} &\text{ if }  a\le x <-\frac{1}{b}+m\\[0.2in]
\ds\frac{1}{m+1-x} &\text{ if }  -\frac{1}{b}+m\le x<a+1\,.
\end{cases}
\end{equation*}
\end{thm}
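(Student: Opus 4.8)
The plan is to obtain the density $h_{a,b}$ by the push-forward recipe already recorded in Section~\ref{s:9}: since $\hat F_{a,b}$ preserves $d\nu=\frac{dx\,dy}{(1+xy)^2}$ and $\mu_{a,b}$ is the projection of $\nu_{a,b}$ onto the $x$-coordinate, the unnormalized density at a point $x$ is the integral of $\frac1{(1+xy)^2}$ over the vertical fiber $\{y : (x,y)\in\hat\Lambda_{a,b}\}$. The whole computation therefore reduces to the elementary antiderivative
\[
\int_{y_0}^{y_1}\frac{dy}{(1+xy)^2}=\frac1x\Bigl(\frac1{1+xy_0}-\frac1{1+xy_1}\Bigr),
\]
evaluated fiber by fiber using the explicit description of $\hat\Lambda_{a,b}$ displayed just before the statement.

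First I would read off the fibers from that description. Over the lower strips the fiber is $[-\frac1m,0]$ (resp. $[-\frac1{m+1},0]$), and the integral gives $\frac1{m-x}$ (resp. $\frac1{m+1-x}$), i.e. $h^-_{a,b}$. Over the upper strips the fiber is $[0,\frac{p}{p+1}]$ on $[c_{p-1},c_p]$ with $c_p:=(T^{-2}S)^p(b-1)$, then $[0,\frac m{m+1}]$ on $[c_{m-1},-\frac1a-1]$, and $[0,1]$ on $[-\frac1a-1,b]$; the same formula produces $\frac1{x+\frac{p+1}p}$, $\frac1{x+\frac{m+1}m}$, and $\frac1{x+1}$ respectively, i.e. $h^+_{a,b}$. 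For an $x$ lying over both an upper and a lower strip the fiber crosses $0$, so the integral splits as $\int_{y_{\min}}^0+\int_0^{y_{\max}}$ and the total density is exactly $h^+_{a,b}+h^-_{a,b}$ with each summand supported on its own $x$-range; this is the meaning of the decomposition $h_{a,b}=h^++h^-$.

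The one genuinely non-routine step is evaluating the normalizing constant $K_{a,b}=\nu(\hat\Lambda_{a,b})=\int_a^b h_{a,b}(x)\,dx$ into the closed form $\log[(m-a)(1+b)^{2-m}]$. The integral $\int h^-$ is a direct pair of logarithms and collapses to $\log(1+b)$. For $\int h^+$ the difficulty is that consecutive pieces carry the shifts $\frac{p+1}p$, which do not telescope naively. The device I would use is that $g:=T^{-2}S$ is \emph{parabolic} with unique fixed point $-1$: conjugating by $\tilde x=\frac1{x+1}$ turns $g$ into the translation $\tilde x\mapsto\tilde x-1$, whence
\[
c_p=(T^{-2}S)^p(b-1)=\frac{(p+1)b-1}{1-pb}.
\]
Substituting this closed form gives the clean identities $c_p+\frac{p+1}p=\frac1{p(1-pb)}$ and $c_{p-1}+\frac{p+1}p=\frac{1+b}{p(1-(p-1)b)}$, so the $p$-th log-integral equals $\log(1-(p-1)b)-\log(1-pb)-\log(1+b)$. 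Summing over $p=1,\dots,m-1$ telescopes the first two terms, and combining with the penultimate and last pieces (using $-\frac1a-1+\frac{m+1}m=-\frac1a+\frac1m$) leaves $\int h^+=\log(m-a)-(m-1)\log(1+b)$. Adding $\int h^-=\log(1+b)$ yields $K_{a,b}=\log(m-a)+(2-m)\log(1+b)=\log[(m-a)(1+b)^{2-m}]$, completing the proof. I expect this telescoping, together with keeping track of which fibers are active in the overlap region near $x=0$, to be the main point requiring care.
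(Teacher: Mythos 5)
Your proposal is correct and takes essentially the same route as the paper: the density $h^{\pm}_{a,b}$ is obtained by fiber-wise integration of $\frac{dy}{(1+xy)^2}$ over the explicit vertical fibers of $\hat\Lambda_{a,b}$, which is exactly the paper's argument via its elementary integration identity. The only difference is that the paper dismisses the evaluation of $K_{a,b}$ as ``a somewhat tedious computation,'' whereas you carry it out explicitly through the parabolic conjugation giving $c_p=\frac{(p+1)b-1}{1-pb}$, the identities $c_p+\frac{p+1}{p}=\frac{1}{p(1-pb)}$ and $c_{p-1}+\frac{p+1}{p}=\frac{1+b}{p(1-(p-1)b)}$, and the resulting telescoping sum; I verified these identities and the final bookkeeping ($\int h^-=\log(1+b)$, $\int h^+=\log(m-a)-(m-1)\log(1+b)$), and they are correct.
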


\begin{proof}
The density formulas are obtained from the simple integration result
\begin{equation}\label{intcd}
\int_{c}^{d}\frac{1}{(1+xy)^2}dy=-\frac{1}{x}\left(\frac{1}{1+dx}-\frac{1}{1+cx}\right)=\frac{d}{1+dx}-\frac{c}{1+cx}\,.
\end{equation}
For the density in the upper part of $\hat\Lambda_{a,b}$, $y\ge 0$, all integrals have the lower boundary $c=0$, hence the result of \eqref{intcd} becomes $1/(x+1/d)$.
This gives the description of $h^+_{a,b}(x)$. For the density in the lower part of $\hat\Lambda_{a,b}$, $y\le 0$, all integrals have the upper boundary $d=0$, hence the result $-1/(-1/c-x)$
and the description of $h^-_{a,b}(x)$.
By  a somewhat tedious computation, we get 
$$
K_{a,b}=\int_{\Lambda_{a,b}}h_{a,b}(x)dx=\log[(m-a)(1+b)^{2-m}]\,,
$$
and this completes the proof.
\end{proof}


\begin{thebibliography}{PaPo}
\bibitem{Ab} L. M.~Abramov, \emph{On the entropy of a flow}, Sov. Math. Doklady. {\bf 128} (1959), no. 5, 873--875.
\bibitem{Ad} R. Adler, \emph {Symbolic dynamics and Markov partitions}, Bull. Amer. Math. Soc. \textbf{35} (1998), no. 1, 1--56.
\bibitem{AF1} R. Adler, L. Flatto, \emph{Cross section
maps for geodesic flows, I (The Modular surface)}, Birkh\"auser,
Progress in Mathematics (ed. A. Katok) (1982), 103--161.
\bibitem{AF3} R. Adler, L. Flatto, \emph{Geodesic flows, interval maps, and symbolic dynamics}, Bull. Amer. Math. Soc. {\bf 25} (1991), no.~2, 229--334.
\bibitem{AmKa} W. Ambrose, S. Kakutani, \emph{Structure and continuity of measurable flows}, Duke Math. J., \textbf{9} (1942), 25--42.
\bibitem{Artin} E. Artin, \emph {Ein
Mechanisches System mit quasiergodischen Bahnen}, Abh. Math. Sem.
Univ. Hamburg {\textbf 3} (1924), 170--175.
\bibitem{BS} R. Bowen, C. Series, \emph{Markov maps associated with Fuchsian groups}, Inst. Hautes \'Etudes Sci. Publ. Math. No. 50 (1979), 153--170.
\bibitem{GK} B. Gurevich, S. Katok, {\em Arithmetic coding and entropy for the positive geodesic flow on the modular surface}, Moscow Math. J. \textbf{1} (2001), no. 4, 569--582.
\bibitem{Hurwitz1}
A. Hurwitz,
\emph{\"Uber eine besondere Art der Kettenbruch-Entwicklung reeler Grossen}, Acta Math. \textbf{12} (1889) 367--405.
\bibitem{KH} A. Katok, B. Hasselblatt, {\em Introduction to the Modern Theory of Dynamical Systems},
Cambridge University Press, 1995.
\bibitem{K2} S. Katok,
{\em Fuchsian Groups}, University of Chicago
Press, 1992.
\bibitem{K3} S. Katok,
{\em Coding of closed geodesics after Gauss and Morse}, Geom.
Dedicata \textbf{63} (1996), 123--145.
\bibitem{KU} S. Katok,  I. Ugarcovici, {\em Geometrically Markov geodesics on
the modular surface}, Moscow Math. J. {\bf 5} (2005),
135--151.
\bibitem{KU2} S. Katok,  I. Ugarcovici, {\em Arithmetic coding of geodesics on the modular surface via continued fractions}, 59--77, CWI Tract {\bf 135}, Math. Centrum, Centrum Wisk. Inform., Amsterdam, 2005.
\bibitem{KU1} S. Katok,  I. Ugarcovici, {\em Symbolic dynamics for the modular surface and beyond}, Bull. Amer. Math. Soc. \textbf{44} (2007), 87--132.
\bibitem{KU3} S. Katok, I. Ugarcovici, {\em Structure of attractors for $(a,b)$-continued fraction transformations}, Journal of modern Dynamics, {\bf 4} (2010), 637--691. 
\bibitem{O}
D. Ornstein, ``Ergodic theory, randomness, and dynamical systems", Yale Univ. Press, New
Haven, 1973.
\bibitem{R} M. Rychlik, \emph{Bounded variation and invariant measures}, Studia Math. 76 (1983), 69--80.
\bibitem{S1} C. Series, {\em On coding geodesics with continued
fractions}, Enseign. Math. {\bf 29} (1980), 67--76.
\bibitem{Su1} D. Sullivan, {\em Entropy, Hausdorff measures old and new, and limit sets of geometrically  finite Kleinian groups}, Acta Math. {\bf 153} (1984), 259--277.
\bibitem{W} P. Walters, \emph{Invariant measures and equilibrium states for some mappings which expand distances}, Trans. Amer. Math. Soc. 236 (1978), 121--153. 
\bibitem{Z} D. Zagier, {\em Zetafunkionen und quadratische K\"orper: eine Einf\"uhrung in die
h\"ohere Zahlentheorie}, Springer-Verlag, 1982.
\bibitem{Zw} R. Zweim\"uller, \emph{Ergodic structure and invariant densities of non-Markovian interval maps with indifferent fixed points}, Nonlinearity, \textbf{11} (1998), 1263--1276.
\end{thebibliography}
\end{document}